\documentclass[11 pt, psamsfonts, leqno]{amsart}
\usepackage{amssymb}
\usepackage{amsmath}	
\usepackage{ bm}

 \textwidth=30cc
 \calclayout
\def\ignore#1{\relax}

\ignore{
\usepackage[scaled=0.92]{helvet}	% set Helvetica as the sans-serif font
		% set Times as the default text font
\usepackage[subscriptcorrection,slantedGreek,nofontinfo,  mtphbi]{mtpro2}
}

\usepackage[plainpages=false, pdfpagelabels,  colorlinks=true, pdfstartview=FitV, linkcolor=blue, citecolor=blue, urlcolor=blue]{hyperref}

%\usepackage{pdfsync}

%%%%%%%%%%%%%%%%%%%%%%%%%%%%%%%%%%%%%%%%%%%%%

%%%%    PACKAGES FOR GRAPHICS
\usepackage{epic}
\usepackage{eepicemu}
\usepackage{graphicx}
\usepackage{epstopdf}

%% TO USE EEPIC GRAPHICS, UNCOMMENT THIS LINE 
%%   AND COMMENT THE NEXT:
%\newcommand\inlinegraphic [2][{}]{\begin{array}{c} \input{./EEPIC/#2.eepic} \end{array}}

%% TO USE EPS/PDF GRAPHICS, UNCOMMENT THIS LINE
%%  AND COMMENT THE PREVIOUS ONE:
\newcommand\inlinegraphic[2][{scale=1.0}]{\begin{array}{c} \includegraphics[#1]{./EPS/#2}\end{array}}

%%%%%%%%%%%%%%%%%%%%%%%%%%%%%%%%%%%%%%%%%%%%%%

\numberwithin{equation}{section}
\numberwithin{figure}{section}

%%%%%%%%%%%%%%%%%%  MACROS   %%%%%%%%%%%%%%%%%%%

%%%%%%%% FAT LETTERS    %%%%%%%%%%%%

\def\Z{{\mathbb Z}}

%%%%%%   GREEK  %%%%%%%%%%%%%%

 %%%%% Fraktur
 \def\S{{\mathfrak{S}}}

%%%%%%%%%%FUNNY ABREVIATIONS

\def\u #1 #2{\mathcal U(#1, #2)}  %notation for ordinary tangles
\def\uhat #1 #2{\widehat{\mathcal U}(#1, #2)}  %affine tangles

\def\kt #1{{KT_{#1}}}  %stands for  Kauffman tangle
\def\akt #1{\widehat{KT}_{#1}}  %stands for affine Kauffman tangle
\def\abmw #1{\widehat{W}_{#1}}  %stands for affine Birman-Wenzl
\def\bmw #1{W_{#1}}  %stands for Birman-Wenzl
\def\ahec #1{\widehat{H}_{#1}}  %stand for affine Hecke
\def\hec #1{H_{#1}}
\def\w #1 #2{\bmw {#1}^{(#2)}}  %ideals in Birman-Wenzl
\def\V #1 #2{V_{#1}^{(#2)}}  %complement of W(n,r-2) in W(n, r)
\def\k #1 #2{ KT_{#1}^{(#2)}}
   %stands for Brauer
 
  %stands for affine Brauer
\def\p #1{\bm{#1}}
\def\pbar #1{\overline{\bm{ #1}}}

\def\inv{^{-1}}
\def\la{\lambda}

\def\p #1{\bm {#1}}
\def\pbar #1{\bar{\p #1}}

\def\mathbold{\bm}

\def\xbold{\bm x}
\def\boldx{\xbold}
\def\boldt{\bm t}
\def\tbold{\boldt}

\def\U{\mathbb U}

\def\End{{\rm End}}

%%%%%%%%%  COMMANDS FROM  PAN
\def\hods{\unskip\kern.55em\ignorespaces}

%%%%%%%%%%%%%% THEOREM LIKE  ENVIRONMENTS    

%% Numbering according to section
%% All items numbered  alike, including  examples

\theoremstyle{plain}
\newtheorem{theorem}{Theorem}[section]

\theoremstyle{plain}
\newtheorem{proposition}[theorem]{Proposition}

\theoremstyle{plain}

\theoremstyle{plain}
\newtheorem{lemma}[theorem]{Lemma}

\theoremstyle{definition}
\newtheorem{definition}[theorem]{Definition}
\theoremstyle{definition}
\newtheorem{example}[theorem]{Example}

\theoremstyle{definition}
\newtheorem{remark}[theorem]{Remark}

\theoremstyle{remark}

%%%%%%%%%%%%%%%%%%%%%%%%%%%%%%%%%%%%%%%%%

\title[Cyclotomic BMW algebras]{Cellularity of  Cyclotomic Birman--Wenzl--Murakami algebras }

\author{Frederick M. Goodman}
\address{Department of Mathematics\\ University of Iowa\\ Iowa
City, Iowa}
\email{ goodman@math.uiowa.edu}

%\thanks{}
\subjclass[2000]{20C08, 16G99, 81R50}
%\date{}

%\externaldocument[I-]{../cyclotomic-BMW/cyclotomicBMW-DEC17_07}

\begin{document}
 \baselineskip=16pt 
 \maketitle

\dedicatory{Dedicated to Gus Lehrer on the occasion of his 60th birthday.}

\begin{abstract}    We show that  cyclotomic BMW algebras are cellular algebras.
\end{abstract}

\setcounter{tocdepth}{1}
%\tableofcontents
\section{Introduction}
In this paper, we prove that the cyclotomic Birman--Wenzl--Murakami algebras are cellular, in the sense of Graham and Leherer ~\cite{Graham-Lehrer-cellular}.

The origin of the BMW algebras was in knot theory.  Shortly after the invention of the Jones link invariant ~\cite{jones-invariant}, Kauffman introduced   a new invariant of regular isotopy for  links in $S^3$, determined by certain skein relations ~\cite{Kauffman}.   
Birman and Wenzl ~\cite{Birman-Wenzl} and independently Murakami ~\cite{Murakami-BMW} then defined  a family braid group algebra quotients from  which Kauffman's invariant could be recovered.  These (BMW) algebras were  defined by generators and relations, but were implicitly modeled on certain algebras of tangles, whose definition was subsequently made explicit by Morton and Traczyk ~\cite{Morton-Traczyk}, as follows:
  Let $S$ be a commutative unital ring with invertible elements
$\rho$, $q$, and $\delta_0$ satisfying $\rho\inv - \rho = (q\inv -q) (\delta_0 - 1)$.  The {\em Kauffman tangle algebra}  $\kt{n, S}$  is the $S$--algebra of framed $(n, n)$--tangles in the disc cross the interval,  modulo Kauffman skein relations:
\begin{enumerate}
\item Crossing relation:
$
\quad \inlinegraphic[scale=.6]{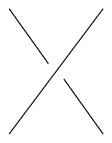} - \inlinegraphic[scale=.3]{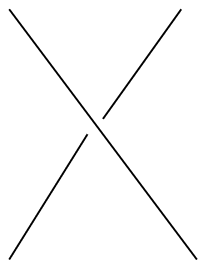} 
\quad = 
\quad
(q\inv - q)\,\left( \inlinegraphic[scale=1]{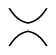} - 
\inlinegraphic[scale=1]{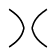}\right).
$
\item Untwisting relation:
$\quad 
\inlinegraphic{right_twist} \quad = \quad \rho \quad
\inlinegraphic{vertical_line} \quad\ \text{and} \quad\ 
\inlinegraphic{left_twist} \quad = \quad \rho\inv \quad
\inlinegraphic{vertical_line}. 
$
\item  Free loop relation:  $T\, \cup \, \bigcirc = \delta_0 \, T. $
\end{enumerate}
Morton and Traczyk ~\cite{Morton-Traczyk}   showed that the $n$--strand algebra $\kt{n, S}$ is free of rank $(2n-1)!!$ as a module over $S$, and
Morton and Wassermann ~\cite{Morton-Wassermann} proved that the BMW algebras and the Kauffman tangle algebras are isomorphic.
 
It is natural to ``affinize" the BMW algebras to obtain BMW analogues of the affine Hecke   algebras of type $A$, see ~\cite{ariki-book}.  The affine Hecke algebra  can be realized geometrically as the algebra of braids in the annulus cross the interval,  modulo Hecke skein relations; this suggests defining the affine Kauffman tangle algebra as the algebra of framed $(n, n)$--tangles in the annulus cross the interval, modulo Kauffman skein relations.  However, Turaev  ~\cite{Turaev-Kauffman-skein} showed that the resulting algebra of  $(0,0)$--tangles is a  (commutative) polynomial algebra in infinitely many variables, so it  makes sense  to absorb this polynomial algebra into the ground ring.  (The ground ring gains infinitely many parameters corresponding to the generators of the polynomial algebra.)  One can also define a purely algebraic version of these algebras, by generators and relations ~\cite{H-O2}, the {\em affine BMW algebras}.  
 In ~\cite{GH1}, we showed that the two versions are isomorphic.

The affine BMW algebras have a distinguished generator $y_1$, which, in the geometric (Kauffman tangle) picture is represented by a braid with one strand wrapping around the hole in the annulus cross interval. {\em  Cyclotomic BMW algebras} are quotients 
 of the  affine BMW algebras   in which the generator $y_1$ satisfies a monic polynomial equation.    The affine and cyclotomic BMW algebras arise naturally in connection with knot theory in the solid torus, braid representations generated by $R$--matrices of symplectic and orthogonal quantum groups, and the representation theory of the ordinary BMW algebras (where the affine generators become Jucys--Murphy elements).  We refer the reader to ~\cite{GH2} for further discussion and references.

In order to get a good theory for cyclotomic BMW algebras, it is necessary to impose  conditions on the ground ring.  An appropriate condition, known as admissibility, was introduced by Wilcox and Yu in ~\cite{Wilcox-Yu}.  
Their condition has a simple formulation in terms of the 
 representation theory of the 2--strand cyclotomic BMW algebra,  and also translates    into explicit relations on the parameters.  
 
Let $\bmw {n, S, r}$  denote the cyclotomic quotient of the $n$--strand affine BMW algebra, in which the affine generator $y_1$ satisfies a polynomial relation of degree $r$, defined over a ring $S$ with appropriate parameters.
 It has been shown in ~\cite{GH2, GH3,  Wilcox-Yu2, Yu-thesis}  that if $S$ is   
 an admissible integral domain,  then 
 $\bmw{n, S, r}$  is a free $S$--module of rank $r^n (2n-1)!!$,  and is isomorphic to a cyclotomic version of the Kauffman tangle algebra.
  In this paper, we show that the techniques of ~\cite{GH2} can be modified to yield a cellular basis of the cyclotomic BMW algebras.

The cellularity of the ordinary BMW algebras has been shown by Xi ~\cite{Xi-BMW} and Enyang ~\cite{Enyang1, Enyang2}.  It is worth pointing out that if we specialize our proof for the cyclotomic case to the ordinary BMW algebras,  we end up showing that the tangle basis of ~\cite{Morton-Traczyk, Morton-Wassermann} is cellular;  in fact,  the proof would require only minor modifications of arguments already present in Morton--Wassermann ~\cite{Morton-Wassermann}.

Yu  ~\cite{Yu-thesis} has also shown that cyclotomic BMW algebras over admissible ground rings are cellular;  her result is slightly more general, since she used a broader definition of admissibility.  See also Remark \ref{remark:  on definition of cellularity}.

\section{Preliminaries}

\subsection{Definitions}

In the following, let
$S$ be a commutative unital ring containing   elements 
$\rho$, $q$, and  $\delta_j$, $j \ge 0$,   with $\rho$, $q$,  and $\delta_0$ invertible, satisfying the relation
$
\rho\inv - \rho=   (q\inv -q) (\delta_0 - 1).
$

\begin{definition}\label{definition affine Kauffman tangle algebra}

 The affine Kauffman tangle algebra $\akt {n, S, r}$ is the $S$--algebra of framed $(n, n)$--tangles in the annulus cross the interval, modulo Kauffman skein relations, namely the {\em crossing relation} and {\em untwisting relation}, as given in the introduction, and the {\em free loop relations}: 
 for $j \ge 0$,  $T\, \cup \, \varTheta_j =  \rho^{-j}\delta_j T, $
 where $T\, \cup\,\ \varTheta_j$ is the union
of an affine  tangle $T$ and a disjoint copy of the closed  curve $\varTheta_j$  that wraps 
$j$ times around the hole in the annulus cross the interval.
\end{definition}

\begin{figure}[ht]
$$
\inlinegraphic[scale=1.2]{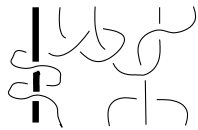}
$$
\caption{Affine $(4, 4)$--tangle diagram}\label{figure-affine tangle diagram}
\end{figure}

Affine tangles can be represented by {\em affine tangle diagrams}.   These are pieces of link diagrams in the rectangle $\mathcal R$,  with some number of endpoints of curves on the top and bottom boundaries of $\mathcal R$, and a distinguished vertical segment representing the hole in the annulus cross interval. (We call this curve the flagpole.)  Affine tangle diagrams are regarded as equivalent if they are regularly isotopic;  see ~\cite{GH2} for details.  An affine $(n, n)$--tangle diagram is one with $n$ vertices (endpoints of curves) on the top, and $n$ vertices on the bottom edge of $\mathcal R$.  See Figure \ref{figure-affine tangle diagram}.  We label the vertices on the top edge from left to right as $\p 1,\dots \p n$ and those on the bottom edge from left to right as $\pbar 1, \dots, \pbar n$.  We order the vertices
by 
$
\p 1 < \p 2 < \cdots < \p n < \pbar n  < \cdots < \pbar 2 < \pbar 1.
$

\begin{definition}\rm\label{definition affine BMW}%3.6
 The {\em affine
Birman--Wenzl--Murakami} algebra
$\abmw  {n, S}$ is the
$S$ algebra with generators $y_1^{\pm 1}$, $g_i^{\pm 1}$  and
$e_i$ ($1 \le i \le n-1$) and relations:
\begin{enumerate}
\item (Inverses)\hods $g_i g_i\inv = g_i\inv g_i = 1$ and 
$y_1 y_1\inv = y_1\inv y_1= 1$.
\item (Idempotent relation)\hods $e_i^2 = \delta_0 e_i$.
\item (Type $B$ braid relations) 
\begin{enumerate}
\item[\rm(a)] $g_i g_{i+1} g_i = g_{i+1} g_ig_{i+1}$ and 
$g_i g_j = g_j g_i$ if $|i-j|  \ge 2$.
\item[\rm(b)] $y_1 g_1 y_1 g_1 = g_1 y_1 g_1 y_1$ and $y_1 g_j =
g_j y_1 $ if $j \ge 2$.
\end{enumerate}
\item[\rm(4)] (Commutation relations) 
\begin{enumerate}
\item[\rm(a)] $g_i e_j = e_j g_i$  and
$e_i e_j = e_j e_i$  if $|i-
j|
\ge 2$. 
\item[\rm(b)] $y_1 e_j = e_j y_1$ if $j \ge 2$.
%\vadjust{\eject}
\end{enumerate}
\item[\rm(5)] (Affine tangle relations)\vadjust{\vskip-2pt\vskip0pt}
\begin{enumerate}
\item[\rm(a)] $e_i e_{i\pm 1} e_i = e_i$,
\item[\rm(b)] $g_i g_{i\pm 1} e_i = e_{i\pm 1} e_i$ and
$ e_i  g_{i\pm 1} g_i=   e_ie_{i\pm 1}$.
\item[\rm(c)\hskip1.2pt] For $j \ge 1$, $e_1 y_1^{ j} e_1 = \delta_j e_1$. 
\vadjust{\vskip-
2pt\vskip0pt}
\end{enumerate}
\item[\rm(6)] (Kauffman skein relation)\hods  $g_i - g_i\inv = (q\inv-q)(e_i -1)$.
\item[\rm(7)] (Untwisting relations)\hods $g_i e_i = e_i g_i = \rho \inv e_i$
 and $e_i g_{i \pm 1} e_i = \rho  e_i$.
\item[\rm(8)] (Unwrapping relation)\hods $e_1 y_1 g_1 y_1 = \rho e_1 = y_1 
g_1 y_1 e_1$.
\end{enumerate}
\end{definition}

Let $X_1$,  $G_i$,  $E_i$  denote the following affine tangle diagrams:

$$
X_1 = \inlinegraphic[scale= .7]{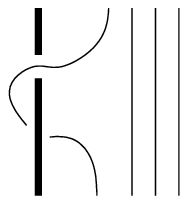}
\qquad
G_i =  \inlinegraphic[scale=.60]{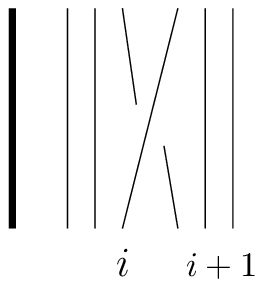}\qquad
E_i =  \inlinegraphic[scale= .7]{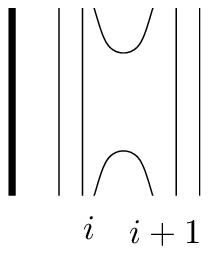} 
$$

\begin{theorem} [\cite{GH1}]
\label{theorem:  isomorphism affine BMW and KT}
The affine BMW algebra $\abmw {n, S}$ is isomorphic to the affine Kauffman tangle algebra $\akt {n, S} $ by a map $\varphi$ determined by
$\varphi(g_i) = G_i$,  $\varphi(e_i) = E_i$,   and $\varphi(y_1) =   \rho X_1$.
\end{theorem}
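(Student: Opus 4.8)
The plan is to prove the isomorphism in three stages: show that $\varphi$ is a well-defined $S$--algebra homomorphism, show that it is surjective, and finally establish injectivity by a spanning-set-versus-basis count, which I expect to be the crux.

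First I would check well-definedness, i.e.\ that the diagrams $G_i$, $E_i$, and $\rho X_1$ satisfy, inside $\akt{n,S}$, each of the defining relations (1)--(8) of $\abmw{n,S}$. Relation (1) holds because $G_i$ and $X_1$ are invertible affine braids and $\rho$ is a unit. Most of the rest are local pictures verified by regular isotopy together with the Kauffman skein relations: the braid relations (3)(a) and the type-$B$ relation in (3)(b) are isotopy moves on affine braid diagrams; relation (6) is literally the crossing relation that defines $\akt{n,S}$; relation (7) combines the untwisting relation with an isotopy of the cap--cup diagram $E_i$; relations (5)(a) and (5)(b) are further local isotopies; and the commutation relations in (3)(b), (4)(a), (4)(b) record that generators supported on disjoint strands, or a distant generator and the flagpole strand, commute. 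The free loop relations enter in (2) and (5)(c): stacking $E_i$ on itself creates a contractible loop $\varTheta_0$, evaluated at $\delta_0$, so that $E_i^2 = \delta_0 E_i$; and since $\varphi(e_1 y_1^{\,j} e_1) = \rho^{\,j} E_1 X_1^{\,j} E_1$ contains a loop winding $j$ times around the hole, the free loop relation evaluates that loop to $\rho^{-j}\delta_j$, and the powers of $\rho$ cancel to leave $\delta_j E_1 = \varphi(\delta_j e_1)$. The unwrapping relation (8) is the most delicate diagram: one drags a single strand once around the flagpole and absorbs the resulting full twist using the untwisting relation.

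Surjectivity is then straightforward, since any affine $(n,n)$--tangle diagram can be sliced into horizontal layers, each an elementary tangle $G_i^{\pm 1}$, $E_i$, or $X_1^{\pm 1}$; hence these generate $\akt{n,S}$ as an $S$--algebra and all lie in the image of $\varphi$. For injectivity I would exhibit an explicit $S$--spanning set of $\abmw{n,S}$ — an affine analogue of the Morton--Wassermann normal form, consisting of reduced words of the shape (bottom tangle)$\,\cdot\,$(permutation)$\,\cdot\,$(top tangle) decorated by bounded windings around the flagpole — and show that $\varphi$ carries this set to a set that is $S$--linearly independent in $\akt{n,S}$. The independence on the tangle side is supplied by the structure of $\akt{n,S}$ as a free $S$--module (Turaev's evaluation of the closed affine loops having been absorbed into $S$, after which a Morton--Traczyk-type argument produces a basis of reduced affine tangle diagrams). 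Matching the two normal forms bijectively forces $\varphi$ to send a spanning set onto a basis, whence it is an isomorphism.

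The hard part will be controlling the spanning set: I must show that relations (1)--(8), and especially the unwrapping relation (8) and relation (5)(c), suffice to rewrite an arbitrary monomial in the generators into the claimed normal form, producing no more elements than the geometric basis permits, so that the counts on the two sides agree. Equivalently, one could attempt to build an inverse $\psi\colon \akt{n,S}\to\abmw{n,S}$ directly on diagrams; the obstacle there is the same, namely verifying that $\psi$ respects regular isotopy and all the skein and free loop relations, which again reduces to the flagpole bookkeeping governed by (8) and (5)(c).
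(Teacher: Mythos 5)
The paper does not prove this statement at all: Theorem \ref{theorem:  isomorphism affine BMW and KT} is quoted from \cite{GH1}, so there is no internal argument to compare against. Judged on its own terms, your outline follows the standard (and, as far as the strategy goes, the correct) route of \cite{GH1} and of Morton--Wassermann in the ordinary case: verify the relations on the diagrams $G_i$, $E_i$, $\rho X_1$ to get a well-defined homomorphism, get surjectivity by slicing tangle diagrams into elementary layers, and then match a spanning set of $\abmw{n,S}$ against a basis of $\akt{n,S}$. Your relation checks are right in substance; in particular the bookkeeping for (5)(c), where the $\rho^{j}$ from $\varphi(y_1)=\rho X_1$ cancels the $\rho^{-j}$ in the free loop relation $T\cup\varTheta_j=\rho^{-j}\delta_j T$, is exactly the reason for the normalization $\varphi(y_1)=\rho X_1$.

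The genuine gap is in the injectivity step. You write that the linear independence on the tangle side ``is supplied by the structure of $\akt{n,S}$ as a free $S$--module'' after invoking Turaev and ``a Morton--Traczyk-type argument.'' But Turaev's theorem only identifies the $(0,0)$--tangle algebra (the skein of the annulus) as a polynomial algebra; it says nothing about the $(n,n)$--algebra for $n\ge 1$. The assertion that $\akt{n,S}$ is free on a set of reduced affine tangle diagrams is essentially equivalent to the theorem you are trying to prove, and it cannot be cited off the shelf: establishing it is the actual content of \cite{GH1}, where one must construct the normal form of flagpole--descending diagrams, prove that the rewriting process terminates with the right count, and then separate the putative basis by explicit means (conditional expectations down to the $n=0$ skein, or enough representations). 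As written, your proposal defers precisely this step --- you acknowledge it as ``the hard part'' but do not supply the mechanism that would make the two counts agree --- so the argument is a correct plan rather than a proof.
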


We now suppose $S$  (as above)  has additional distinguished invertible elements $u_1, \dots, u_r$.

 \begin{definition}
The {\em cyclotomic BMW algebra}  $\bmw{n, S, r}(u_1, \dots, u_r)$
is the quotient of $\abmw{n, S}$ by the relation
\begin{equation} \label{equation: cyclotomic relation1}
(y_1 - u_1)(y_1 - u_2) \cdots (y_1 - u_r) = 0.
\end{equation}
\end{definition}

To define the cyclotomic Kauffman tangle algebra, we begin by rewriting the relation Equation (\ref{equation: cyclotomic relation1})  in the form
%\begin{equation*} \label{equation: cyclotomic relations2} 
$
\sum_{k = 0}^r  (-1)^{r-k}  \varepsilon_{r-k}(u_1, \dots, u_r) y_1^k = 0,
$
%\end{equation*}
where $\varepsilon_j$  is the $j$--th elementary symmetric function.  The corresponding relation in the affine Kauffman tangle algebra is
%\begin{equation} \label{equation: cyclotomic relations3} 
$
\sum_{k = 0}^r  (-1)^{r-k}  \varepsilon_{r-k}(u_1, \dots, u_r) \rho^k X_1^k = 0,
$
%\end{equation}
Now we want to impose this as a local skein relation.

 \begin{definition}
The {\em cyclotomic Kauffman tangle algebra} $\kt{n, S, r}(u_1,  \dots, u_r)$ is the quotient of the affine Kauffman tangle algebra $\akt{n, S}$ by the
cyclotomic skein relation:
\begin{equation} \label{equation: kt cyclotomic relation}
\sum_{k = 0}^r  (-1)^{r-k}  \varepsilon_{r-k}(u_1, \dots, u_r) \rho^k \inlinegraphic[scale=.7]{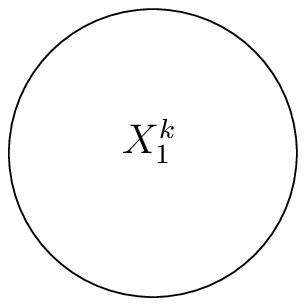} = 0,
\end{equation}
The sum is over affine tangle diagrams which differ only in the interior of  the indicated disc and
are identical outside of the disc;    the interior of the disc contains an interval on the flagpole and a piece of an affine tangle diagram isotopic to $X_1^k$.
\end{definition}

\begin{definition} Say that $S$ is {\em weakly admissible} if $e_1$ is not a torsion element in $\bmw {2, S, r}$.  Say that $S$ is {\em admissible}  if  $\{e_1, y_1 e_1, \dots, y_1^{r-1} e_1\}$ is linearly independent over $S$ in $\bmw {2, S, r}$.
\end{definition}

These conditions can be translated into explicit conditions on the parameters of $S$; see
~\cite{Wilcox-Yu, GH2, GH3}.

\begin{theorem}[\cite{GH2, GH3, Yu-thesis, Wilcox-Yu2}]  \label{theorem: cyclotomic isomorphism}
If $S$ is an admissible integral domain, then the assignment $e_i \mapsto E_i$, $g_i \mapsto G_i$,  $y_1 \mapsto \rho X_i$  determines an isomorphism of $\bmw{n, S, r}$ and
$\kt{n, S, r}$. Moreover these algebras are free $S$--modules of rank $r^n(2n-1)!!$.
\end{theorem}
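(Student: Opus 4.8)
The plan is to leverage the affine isomorphism of Theorem~\ref{theorem:  isomorphism affine BMW and KT} and then match the two cyclotomic quotients via a well-definedness check, an algebraic upper bound on the rank, and a geometric lower bound. First I would verify that $\varphi$ descends to the quotients. Since $\varphi(y_1) = \rho X_1$, the image of the cyclotomic polynomial $(y_1-u_1)\cdots(y_1-u_r) = \sum_{k=0}^r (-1)^{r-k}\varepsilon_{r-k}(u_1,\dots,u_r)\,y_1^k$ is exactly $\sum_{k=0}^r (-1)^{r-k}\varepsilon_{r-k}(u_1,\dots,u_r)\,\rho^k X_1^k$, which is the instance of the cyclotomic skein relation~(\ref{equation: kt cyclotomic relation}) obtained by placing the distinguished disc at the standard position along the flagpole on the first strand. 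This element vanishes in $\kt{n,S,r}$, so $\varphi$ induces a well-defined algebra homomorphism $\bar\varphi\colon \bmw{n,S,r}\to\kt{n,S,r}$ sending $e_i\mapsto E_i$, $g_i\mapsto G_i$, $y_1\mapsto \rho X_1$; it is surjective because $\varphi$ is and these images generate.

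Next I would establish the upper bound, namely that $\bmw{n,S,r}$ is spanned over $S$ by at most $r^n(2n-1)!!$ elements. Starting from a normal form for the affine algebra $\abmw{n,S}$, writing each word as an affine (``dotted'') part times a Brauer-type tangle part, I would use the cyclotomic relation in the form $y_1^r = \sum_{k<r}c_k y_1^k$ together with relations (4b), (5c), and (8) to reduce the exponent of each of the $n$ dotted degrees of freedom to lie in $\{0,\dots,r-1\}$, producing the factor $r^n$, while the tangle part contributes the factor $(2n-1)!!$. This is a purely combinatorial reduction and I expect it to be routine.

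The hard part will be the matching lower bound: that $\kt{n,S,r}$ is free of rank at least $r^n(2n-1)!!$. This is precisely where admissibility enters and where the geometric Kauffman-tangle model is indispensable. The base case is the two-strand algebra, where admissibility literally asserts that $\{e_1, y_1 e_1,\dots,y_1^{r-1}e_1\}$ is $S$-linearly independent in $\bmw{2,S,r}$; the inductive step constructs a cyclotomic analogue of the Morton--Traczyk tangle basis and proves its independence, for instance by exhibiting a nondegenerate Markov-type trace and invoking the freeness of rank $(2n-1)!!$ of the ordinary Kauffman tangle algebra \cite{Morton-Traczyk} to seed the induction on $n$. The obstacle is to rule out an unexpected collapse of tangle diagrams under the skein relations; admissibility is exactly the hypothesis preventing such collapse in the two-strand algebra, and the content of the argument is to propagate this control through the induction.

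Finally I would assemble the pieces. Over the integral domain $S$ we have a surjection $\bar\varphi$ from a module spanned by $N := r^n(2n-1)!!$ elements onto a module that is free of rank at least $N$. Then $\kt{n,S,r}$ is generated by $\le N$ elements, so a free module over a commutative ring has rank exactly $N$ and those $N$ images form a basis; pulling back their linear independence forces the $N$ spanning elements of $\bmw{n,S,r}$ to be independent as well. Hence both algebras are free of rank $N=r^n(2n-1)!!$ and $\bar\varphi$ is an isomorphism.
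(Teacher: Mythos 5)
First, a point of reference: the paper itself gives no proof of this theorem. It is imported wholesale from \cite{GH2, GH3, Yu-thesis, Wilcox-Yu2} and used as a black box, so there is no internal argument to compare yours against; the comparison has to be made against the cited sources. Measured against those, your outline follows the standard strategy: check that $\varphi$ kills the cyclotomic polynomial in $y_1$ so that it descends to a surjection $\bar\varphi\colon \bmw{n,S,r}\to\kt{n,S,r}$; produce a spanning set of $\bmw{n,S,r}$ of cardinality $N = r^n(2n-1)!!$ by reducing exponents modulo the cyclotomic relation (this is essentially what Proposition \ref{proposition: U-prime spanning set of cyclotomic algebra} of the present paper does, in a different normal form); prove that $N$ suitable elements are linearly independent; and then invoke the commutative-algebra fact that a surjection from a module with $N$ generators onto a module containing $N$ independent elements forces both to be free of rank $N$ and the map to be an isomorphism. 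The well-definedness step, the spanning step, and the final assembly are all correct and are indeed the routine parts.

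The genuine gap is the lower bound, which you yourself call ``the hard part'' and then dispatch in one sentence (``for instance by exhibiting a nondegenerate Markov-type trace \dots to seed the induction on $n$''). That is a pointer to where a proof might live, not a proof, and it is precisely where the entire content of the theorem --- and every use of the admissibility hypothesis --- is concentrated. Concretely, to run such an induction one needs: (i) that $\bmw{n-1,S,r}$ embeds in $\bmw{n,S,r}$, which is not available before the theorem is proved; (ii) a conditional expectation $\bmw{n,S,r}\to\bmw{n-1,S,r}$ compatible with the tangle picture, whose construction is itself delicate; and (iii) a mechanism for separating the $r^n$ winding degrees of freedom, which is where the $2$--strand admissibility condition must be propagated to all $n$ (in \cite{GH2, GH3} this is done via a universal admissible ground ring and specialization, together with the freeness of the cyclotomic Hecke algebra quotient). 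None of this is supplied or even sketched at a level where one could check it. A smaller issue: asserting that $\kt{n,S,r}$ is ``free of rank at least $N$'' puts the cart before the horse, since freeness is a conclusion of the argument; what one actually proves is that a specific $N$--element family is $S$--linearly independent, and freeness then follows from the spanning bound (over an integral domain, by passing to the fraction field). As an outline your proposal is faithful to the cited literature; as a proof it leaves the theorem's substance unestablished.
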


Because of  Theorems \ref{theorem:  isomorphism affine BMW and KT} and
\ref{theorem: cyclotomic isomorphism},   we will no longer take care to distinguish between
 affine or cyclotomic BMW algebras and their realizations as algebras of tangles.  
We identify $e_i$ and $g_i$ with the corresponding affine tangle diagrams and $x_1 = \rho\inv y_1$ with the affine tangle diagram $X_1$.  The ordinary BMW algebra  $\bmw{n,S}$ imbeds in the affine BMW algebra $\bmw{n, S}$ as the subalgebra generated by the $e_i$'s and $g_i$'s.

\subsection{The rank of  tangle diagrams}   An ordinary or affine tangle diagram $T$ with $n$ strands is said to have {\em rank}   $\le r$ if it can be written as a product
$T = T_1 T_2$, where $T_1$ is an (ordinary or affine)  $(r, n)$  tangle and $T_2$  is
an (ordinary or affine)  $(n, r)$ tangle.

\subsection{The algebra involution $^*$ on BMW algebras}  Each of the ordinary, affine, and cyclotomic BMW algebras admits a unique involutive algebra anti--\break automorphism,  denoted  $a \mapsto a^*$,  fixing each of the generators $g_i$, $e_i$  (and $x_1$ in the affine or cyclotomic case).      For an (ordinary or affine)  tangle diagram $T$ representing an element of one of these algebras,  $T^*$  is the diagram obtained by flipping $T$ around a horizontal axis.

\subsection{The Hecke algebra and the BMW algebra}
\def\braid{\mathcal B}
\def\s{\sigma}
The  Hecke algebra $\hec {n,S}(q^2)$ of type $A$ is the quotient of the group algebra  
$S\, \braid_n$
of the braid group,  by the relations  $\s_i - \s_i\inv =  (q - q\inv) \quad  (1 \le i \le n-1),$
where $\s_i$ are the Artin braid generators.  Let $\tau_i$  denote the image of the braid generator
$\sigma_i$ in the Hecke algebra.

 Given an permutation $\pi \in \S_n$,   let $\beta_\pi$ be the
{\em positive permutation braid}  in the braid group $\braid_n$ whose image in $\S_n$ is $\pi$.
A positive permutation braid is a braid in which two strands cross an most once, and all crossings are positive, that is the braid is in the monoid generated by the Artin generators
$\sigma_i$ of the braid group.  Let $g_\pi$  be the image of $\beta_\pi$ in $\bmw  {n,S}$, and
$\tau_\pi$  the image of  $\beta_\pi$ in $\hec {n, S}(q^2)$.
If $\pi$ has a reduced expression $\pi = s_{i_1} s_{i_2} \cdots s_{i_\ell}$,  then
$g_\pi =  g_{i_1} g_{i_2} \cdots g_{i_\ell}$, and  $\tau_\pi =  \tau_{i_1} \tau_{i_2} \cdots \tau_{i_\ell}$.
It is well known that $\{\tau_\pi :  \pi \in \S_n\}$  is a basis of the Hecke algebra $\hec {n, S}(q^2)$.
The Hecke algebra has an involutive algebra anti-automorphism $x \mapsto x^*$  determined by $(\tau_\pi)^* = \tau_{\pi\inv}$.

\subsection{Affine and cyclotomic Hecke algebras}  \label{subsection: preliminaries, affine and cyclotomic Hecke algebras}

\begin{definition} (See ~\cite{ariki-book}.)
Let $S$ be a commutative unital ring with an invertible element $q$.  The {\em affine Hecke algebra} 
$\ahec{n,S}(q^2)$ 
over $S$
is the $S$--algebra with generators $t_1,  \tau_1,  \dots, \tau_{n-1}$, with relations:
\begin{enumerate}
\item  The generators $\tau_i$ are invertible,  satisfy the braid relations,  and \break $\tau_i - \tau_i\inv =  (q - q\inv)$.
\item  The generator $t_1$ is invertible,  $t_1 \tau_1 t_1 \tau_1 = \tau_1 t_1 \tau_1  t_1$  and $t_1$ commutes with $\tau_j$  for $j \ge 2$.
\end{enumerate}
Let $u_1,  \dots, u_r$  be additional invertible elements in $S$.   The {\em cyclotomic Hecke algebra}
$\hec{n, S, r}(q^2; u_1,  \dots, u_r)$  is the quotient of the affine Hecke algebra $\ahec{n, S}(q^2)$ by the polynomial relation    $(t_1 - u_1) \cdots (t_1 - u_r) = 0$.
\end{definition}

Define elements $t_j$  ($1 \le j \le n$)   in the affine or cyclotomic Hecke algebra by
$$
t_j =  \tau_{j-1} \cdots \tau_1 t_1 \tau_1 \cdots \tau_{j-1}.
$$
It is well known that the ordinary Hecke algebra $\hec {n, S}(q^2)$ imbeds in the affine  Hecke algebra and that the affine Hecke algebra $\ahec{n,S}(q^2)$  is a free $S$--module with basis the set of elements 
$\tau_\pi   \boldt^b$,  where $\pi \in S_n$  and $\boldt^b$  denotes a Laurent monomial in $t_1, \dots, t_n$.  Similarly, a  cyclotomic Hecke algebra $\hec{n, S, r}(q; u_1,  \dots, u_r)$ is a free $S$--module with basis the set of elements $\tau_\pi   \boldt^b$,  where now $\boldt^b$ is a monomial with restricted exponents 
$0 \le b_i \le r-1$.

Let $S$ be a commutative ring with appropriate parameters $\rho$, $q$, $\delta_j$.
There is an algebra homomorphism $p : \abmw {n, S} \rightarrow \ahec {n, S}(q^2)$ determined by $g_i \mapsto \tau_i$,  $e_i \mapsto 0$, and $x_1 \mapsto t_1$.
The kernel of $p$ is the ideal $I_n$ spanned by affine tangle diagrams with rank strictly less than $n$.  Suppose that $S$ has  additional parameters $u_1, \dots, u_r$.  Then $p$  induces a homomorphism of the cyclotomic quotients $p: \bmw {n, S, r}(u_1, \dots u_r) \rightarrow \hec {n, S, r}(q^2; u_1, \dots, u_r)$.

The affine and cyclotomic Hecke algebras have unique involutive algebra anti-automorphisms $^*$ fixing the generators $\tau_i$ and $t_1$.  (The image of a word in the generators is the reversed word.)  The quotient map $p$  respects the involutions, $p(x^*) = p(x)^*$.

We have a linear section $t:  \ahec {n, S}(q^2) \rightarrow \abmw {n, S}$
of the map $p$  determined by $t(\tau_\pi \boldt^b) =  g_\pi \boldx^b$.  Moreover, 
$t(x^*)  \equiv  t(x)^* \mod I_n$  and $t(x)t(y)  \equiv t(xy) \mod I_n$  for any $x, y \in \ahec {n, S}(q^2)$.
Analogous statements hold for the cyclotomic algebras.

\subsection{Cellular bases}
We recall the definition of {\em cellularity}  from ~\cite{Graham-Lehrer-cellular}; see also
~\cite{Mathas-book}.   The version of the definition given here is slightly weaker than the original definition in ~\cite{Graham-Lehrer-cellular}; we justify this below.

\begin{definition}  Let $R$ be an integral domain and $A$ a unital $R$--algebra.  A {\em cell datum} for $A$ consists of  an $R$--linear algebra involution $*$ of $A$; a partially ordered set $(\Lambda, \ge)$ and 
for each $\la \in \Lambda$  a set $\mathcal T(\lambda)$;  and   a subset $
\mathcal C = \{ c_{s, t}^\la :  \la \in \Lambda \text{ and }  s, t \in \mathcal T(\la)\} \subseteq A$; 
with the following properties:
\begin{enumerate}
\item  $\mathcal C$ is an $R$--basis of $A$.
\item  For each $\la \in \Lambda$,  let $\breve A^\la$  be the span of the  $c_{s, t}^\mu$  with
$\mu > \la$.   Given $\la \in \Lambda$,  $s \in \mathcal T(\la)$, and $a \in A$,   there exist coefficients 
$r_v^s( a) \in R$ such that for all $t \in \mathcal T(\la)$:
$$
a c_{s, t}^\la  \equiv \sum_v r_v^s(a)  c_{v, t}^\la  \mod  \breve A^\la.
$$
\item  $(c_{s, t}^\la)^* \equiv c_{t, s}^\la   \mod  \breve A^\la$ for all $\la\in \Lambda$ and, $s, t \in \mathcal T(\lambda)$.

\end{enumerate}
$A$ is said to be a {\em cellular algebra} if it has a  cell datum.  
\end{definition}

For brevity, we say write  that   $\mathcal C$ is a cellular basis of $A$.  

\begin{remark} \mbox{} \label{remark:  on definition of cellularity}
\begin{enumerate}
\item  The original definition in  ~\cite{Graham-Lehrer-cellular} requires that $(c_{s, t}^\la)^* = c_{t, s}^\la $ for all $\la, s, t$.  However, one can check that the basic consequences of the definition (\cite{Graham-Lehrer-cellular}, pages 7-13) remain valid with our weaker axiom.
\item  In case $2 \in R$ is invertible, one can check that our definition is equivalent to the original.
\item  One can formulate a version of the  \lq\lq basis--free" definition of cellularity of K\"onig and Xi  (see for example  ~\cite{KX-inflations})  equivalent to our modified definition.
\item  Suppose $A$ is an $R$--algebra with involution $*$, and $J$ is a $*$--closed ideal; then we have an induced algebra involution $*$ on $A/J$.  Let us say that $J$ is a cellular ideal in $A$ if it satisfies the axioms of a  cellular algebra (except for being unital)  with cellular basis 
$ \{ c_{s, t}^\la :  \la \in \Lambda_J \text{ and }  s, t \in \mathcal T(\la)\} \subseteq J$
and we have, as in point (2) of the definition of cellularity, 
$
a c_{s, t}^\la  \equiv \sum_v r_v^s(a)  c_{v, t}^\la  \mod  \breve J^\la
$
not only for $a \in J$ but also for $a \in A$.
If $J$ is a cellular ideal in $A$, and 
$A/J$ is  cellular  (with respect to the given involutions), then $A$ is cellular.   With the original definition of ~\cite{Graham-Lehrer-cellular},  this statement would be true only  if
$J$ has a $*$--invariant $R$--module complement in $A$.
\item   Yu  ~\cite{Yu-thesis} has also proved cellularity of the cyclotomic BMW algebras, using the original definition of cellularity of ~\cite{Graham-Lehrer-cellular};  at one point, her proof requires a more delicate analysis,  in order to obtain a $*$--invariant complement in 
$\bmw {n, S, r}$ of the kernel of $p: \bmw{n, S,r} \to  \hec{n, S, r}$.

\end{enumerate}
\end{remark}

\section{Some new bases of the affine and cyclotomic BMW algebras}
  The basis of cyclotomic BMW algebras that we produced in ~\cite{GH2} involved ordered monomials in the non--commuting but mutually conjugate elements
$$
x_j' = g_{j-1} \cdots g_1 x_1 g_1\inv \cdots g_{j-1}\inv.
$$
To obtain this basis, we first produced a basis of the {\em affine} BMW algebra consisting of affine tangle diagrams satisfying certain topological conditions.

Here we want to produce a new finite basis of the cyclotomic BMW algebras involving monomials in the commuting, but non--conjugate, elements
$$
x_j = g_{j-1} \cdots g_1 x_1 g_1 \cdots g_{j-1}.
$$
At an intermediate stage of the exposition, we will also use the elements
$$
x_j'' = g_{j-1}\inv \cdots g_1\inv  x_1 g_1 \cdots g_{j-1}, 
$$
see the following figure: 
$$
x_4 = \inlinegraphic{X4}, \quad \quad x'_4 = \inlinegraphic{X4-prime}, \quad\quad x''_4 = \inlinegraphic{X4-prime-prime}.
$$

\subsection{Flagpole descending affine tangle diagrams}
 
\begin{definition}\rm%5.2 
\label{definition: orientation}
An {\em orientation} of an  affine 
  $(n,n)$--tangle diagram  is
  a linear ordering of the strands,
 a choice of an orientation of each strand, and a choice of an initial point on 
each closed loop.
   \end{definition}
 
 An orientation determines a way of traversing the tangle diagram;  namely,  
the 
strands are traversed successively, in
the given order and orientation (the closed loops being traversed starting at 
the assigned initial point).

  \begin{definition}\rm%5.4
  \label{definition: stratified}
  An oriented affine  $(n,n)$--tangle diagram is {\em stratified} if 
  \begin{enumerate}
  \item
  there is a linear ordering of the strands  such that if strand $s$ precedes strand $t$ in the order, then each crossing of $s$ with $t$ is an over--crossing. 
   \item  each strand is totally descending,  that is, each self--crossing of the strand is encountered first as an over--crossing as the strand is traversed according to the orientation.
   \end{enumerate}  
   We call the corresponding ordering of the strands the {\em stratification order}.  
  \end{definition}
  
  Note that a stratification order need not coincide with the ordering of strands determined by the orientation.   {\em In the rest of the paper, we are going to use the following orientation and stratification order  on affine tangle diagrams;  when we say an affine tangle diagram is oriented or stratified, we mean with respect to this orientation and stratification order.}

\begin{definition}  \label{definition:  verticals second orientation}  A {\em verticals--second orientation} of affine tangle diagrams is one in which:
\begin{enumerate}
\item Non-closed strands are oriented from lower to higher numbered vertex.
\item  Horizontal strands with vertices at the top of the diagram precede vertical strands,  and vertical strands precede horizontal strands with vertices at the bottom of the diagram.  Non-closed strands precede closed loops.
\item  Horizontal strands with vertices at the top of the diagram are ordered according to the order of their {\em final}  vertices.  Vertical strands 
 and horizontal strands with vertices at the bottom of the diagram are each ordered according to the order of their {\em initial} vertices.
\end{enumerate}
 A {\em verticals--second  stratification order} is one in which
the order of strands agrees with that of a verticals--second orientation, except that
vertical strands are ordered according to the  {\em reverse} order of their {\em initial} vertices.
\end{definition}

An affine tangle diagram without closed loops has a unique verticals--second orientation and a unique verticals--second stratification order.

A {\em simple winding} is a piece of an affine tangle
diagram with one ordinary strand, without self--crossings, 
 regularly isotopic to the intersection of one of the
affine tangle diagrams
$x_1$ or $x_1\inv$ with a neighborhood of the flagpole.

\begin{definition}\rm%6.2
An affine tangle diagram is in {\em standard
position} (See Figure \ref {figure-standard position}) if:
\begin{enumerate}
\item  It has no crossings to the left of the flagpole.
\item  There is a neighborhood of the flagpole whose intersection with
the tangle diagram is a union of simple windings.
\item  The simple windings have no crossings and are not nested.  That is,
between the two crossings of a simple winding with the flagpole, there is
no other crossing of a strand with the flagpole.
\end{enumerate}
\end{definition}

\begin{figure}[t]
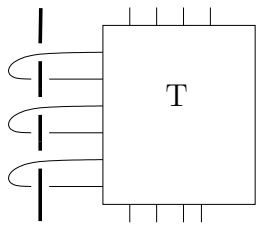

$$\inlinegraphic{standard_position}$$
\caption{Affine tangle diagram in standard position} \label{figure-standard position}
\end{figure}

  \begin{definition} An oriented, stratified affine tangle diagram $T$  in standard position is said to be {\em flagpole descending} if it satisfies the following conditions:
  \begin{enumerate}
   \item  $T$ is not regularly isotopic to an affine tangle diagram in standard position with fewer simple windings.
  \item  The strands of $T$ have no self--crossings.
  \item   As  $T$ is traversed according to the orientation,  successive
  crossings of ordinary strands with the flagpole descend the flagpole.   
  \end{enumerate}
  \end{definition}

\begin{proposition} \label{proposition:  flagpole descending verticals--second span}
 The affine BMW algebra $\abmw{n, S}$  is spanned by affine tangle diagrams without closed loops that are
flagpole descending and stratified.
\end{proposition}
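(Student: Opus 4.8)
The plan is to start from the spanning set consisting of all affine tangle diagrams (available via the isomorphism $\abmw{n,S}\cong\akt{n,S}$ of \cite{GH1}) and to rewrite an arbitrary diagram $T$, modulo the defining relations, as a linear combination of diagrams of the asserted type. The engine of the whole argument is the Kauffman skein relation (relation~(6)): since $g_i - g_i\inv = (q\inv-q)(e_i-1)$, resolving any single crossing replaces a diagram by the same diagram with that crossing switched, plus two diagrams having strictly fewer crossings. Thus any crossing may be switched at the price of terms of lower crossing number, which makes a (strong) induction on the number $c(T)$ of crossings of $T$ the natural framework; the smoothing terms may create closed loops, but they have smaller $c$ and so are covered by the inductive hypothesis.

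First I would bring $T$ into standard position. Using regular isotopy together with the untwisting relation~(7), one pushes all crossings to the right of the flagpole and arranges the intersection with a neighbourhood of the flagpole to be a union of simple, unnested, crossingless windings, each framing curl being absorbed into a scalar $\rho^{\pm1}$; this is the standard--position normalization carried out in~\cite{GH2}. Closed loops are then removed: I resolve any crossing lying on a closed loop by the skein relation (strictly lowering $c(T)$) until the loop is disjoint and unknotted, and absorb it using the free loop relation $T\cup\varTheta_j = \rho^{-j}\delta_j\,T$. After these steps $T$ is a closed--loop--free diagram in standard position, to which the unique verticals--second orientation and stratification order apply.

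To stratify $T$ I run a secondary induction on the number $b(T)$ of crossings that violate the stratification order, namely self--crossings first met as under--crossings and crossings in which the strand that is later in the stratification order passes over. Switching one such crossing by the skein relation yields the same diagram with $b$ lowered by one---the over/under datum, and hence the good/bad status, of every other crossing is untouched---together with terms of smaller $c$. Hence the double induction on the pair $(c(T), b(T))$ expresses $T$ through stratified, standard--position, loop--free diagrams. It remains to promote a stratified diagram to a flagpole--descending one: a totally descending strand is unknotted, so its self--crossings may be undone one at a time by regular isotopy and Reidemeister~I moves, each resulting curl again contributing a factor $\rho^{\pm1}$ through untwisting (condition~(2)); and sliding the now crossingless, unnested windings along the flagpole orders their flagpole crossings so that they descend (condition~(3)) and reduces their number to the minimum (condition~(1)).

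The main obstacle is exactly the bookkeeping of these interacting normalizations: one must check that every move used to secure one condition neither reintroduces crossings nor spoils a condition already achieved, and that the complexity genuinely decreases so that the process terminates. The delicate point is the final step, where I must verify that the windings can be slid past one another, and their self--crossings removed, without creating new crossings to the right of the flagpole or disturbing the stratification already obtained---for which one uses precisely that, in standard position, the windings are unnested and crossingless, so that sliding them along the flagpole is a free isotopy.
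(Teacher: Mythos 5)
Your proposal is correct and follows essentially the same route as the paper: the paper's proof is simply a citation of \cite{GH2}, Proposition 2.19, and the algorithm you describe---induction on crossing number via the Kauffman skein relation, normalization to standard position, removal of closed loops via the free loop relations, and the stratification and flagpole-descent bookkeeping---is precisely the argument of \cite{GH2}, Propositions 2.18 and 2.19. The delicate points you flag at the end (that reordering and cancelling windings and removing self-crossings are regular isotopies that neither create new crossings nor disturb conditions already achieved) are exactly the details supplied by that reference.
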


 \begin{proof}  This follows  from ~\cite{GH2}, Proposition 2.19.
%\ref{I-proposition:  generalization of descending diagrams span}.
 \end{proof}

\subsection{$\Z$--Brauer diagrams and liftings in the affine BMW algebras}

We recall that a Brauer diagram is a tangle diagram in the plane,  in which information about over-- and under--crossings is ignored.  
  Let $G$ be a group.   A  {\em $G$--Brauer diagram}  (or {\em $G$--connector}) is
an Brauer diagram in which each strand is endowed with an orientation and labeled by an element of the group $G$.   Two labelings are regarded as the same if the orientation of a strand is reversed and the group element associated to the strand is inverted.

Define a map $c$ (the connector map) from oriented affine $(n,n)$--tangle diagrams  {\em without  closed loops}  to 
$\Z$--Brauer diagrams as follows.  Let $a$ be an oriented affine $(n,n)$--tangle diagram without closed loops.
 If $s$ connects two vertices
$\mathbold v_1$  to $\mathbold v_2$,   include a curve $c(s)$ in $c(a)$ connecting the same vertices with the same orientation, and label
 the oriented strand $c(s)$  with the {\em winding number} of $s$ with respect to the 
 flagpole.\footnote{The 
winding number $n(s)$ is determined 
 combinatorially as follows:  traversing the strand in its orientation, list the over--crossings $(+)$ and
under-crossings $(-)$ of the strand with the flagpole.  Cancel any
two successive  $+$'s or $-$'s in the list, so  the list now consists of
alternating $+$'s and $-$'s.  Then $n(s)$ is $\pm (1/2)$ the length of the
list, $+$ if the list begins with a $+$, and $-$ if the list begins with
a~$-$.}

\begin{lemma}[\cite{GH2}, Lemma  2.21]
%\ref{I-lemma:  isotopic layered diagrams}
\label{lemma:  isotopic flagpole descending diagrams}
Two  affine tangle diagrams without closed loops,  with the same $\Z$--Brauer diagram, both stratified and flagpole descending,  are  regularly isotopic.
\end{lemma}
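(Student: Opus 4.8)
The plan is to show that the $\Z$--Brauer diagram, together with the stratification and flagpole--descending normalizations, pins each diagram down up to regular isotopy, by lifting to an honest tangle in the solid torus (annulus cross interval) and analyzing separately the portion near the flagpole and the portion away from it. Throughout I would use the standard Reidemeister theorem in the framed setting: two diagrams are related by Reidemeister II and III moves (plus planar isotopy) if and only if they represent the same framed tangle, so that regular isotopy of diagrams is exactly framed isotopy of tangles. Since the two given diagrams have no closed loops, each strand is an arc joining a specified pair of endpoints with a specified orientation and a specified winding number, all recorded by the common connector.

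First I would treat the region away from a fixed tubular neighborhood of the flagpole. There the diagram is an ordinary tangle diagram, and the stratification condition furnishes a consistent height function: if strand $s$ precedes strand $t$ in the verticals--second stratification order then every crossing of $s$ with $t$ is an over--crossing, while by the flagpole--descending hypothesis there are no self--crossings at all. Hence the diagram lifts to an embedding in which the strands sit at distinct heights prescribed by the stratification order, so that each strand is unknotted and any two are unlinked. Because the verticals--second orientation and stratification order are functions of the connector alone, and because unknotted, unlinked systems of arcs with prescribed endpoints and prescribed winding numbers have a unique isotopy class rel boundary in the solid torus, the two lifts agree, up to ambient isotopy, in this region.

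Next I would analyze the neighborhood of the flagpole. In standard position this region is a disjoint union of simple windings, and the minimality clause in the definition of flagpole--descending forces the number of simple windings on a strand to equal the absolute value of its winding number, while the sign fixes the handedness. The flagpole--descending condition~(3), read together with the fixed verticals--second orientation, linearly orders all crossings of strands with the flagpole from top to bottom, so the vertical stacking of simple windings along the flagpole is completely determined by the connector data. Any two stackings with the same ordered list of windings are carried into one another by sliding windings past each other along the flagpole, a motion realized by a sequence of Reidemeister II and III moves; thus the two diagrams are regularly isotopic near the flagpole as well.

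The main obstacle is to assemble these two local analyses into a single global regular isotopy while keeping the framing under control. The delicate point occurs away from the flagpole: an \emph{all--over} bigon between strands $s$ and $t$ cannot be removed by a lone Reidemeister II move, since that move requires an over/under pair, so one must instead exploit the global height function coming from the stratification to separate the strands by a coordinated sequence of II and III moves applied to all strands simultaneously. Once the two local pictures are matched, the assembly is clean: there are no self--crossings and the winding numbers coincide, so the writhe of every strand is the same in both diagrams, the framings agree, and the ambient isotopy realizing agreement of the connectors may be taken to preserve framing. By the framed Reidemeister theorem this ambient isotopy descends to a sequence of Reidemeister II and III moves between the two affine tangle diagrams, which is the required regular isotopy.
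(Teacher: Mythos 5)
Note first that the paper does not prove this lemma at all; it is quoted verbatim from [GH2, Lemma 2.21]. Your overall strategy --- use minimality and the descending condition to pin down the number, handedness, and vertical order of the simple windings along the flagpole, and then use the stratification to reduce the rest of the diagram to a layered picture in which each strand sits at its own height and is determined by its endpoints and winding number --- is essentially the argument used there, so the route is the right one. Two points need repair, however. The concrete error is your claim that ``an all--over bigon between strands $s$ and $t$ cannot be removed by a lone Reidemeister II move, since that move requires an over/under pair.'' This is exactly backwards: Reidemeister II creates or removes precisely the bigon in which one strand passes \emph{entirely over} the other at both crossings; the configuration it cannot undo is the clasp, where the crossings alternate. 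So your ``main obstacle'' is in fact the engine of the proof: the stratification guarantees that every bigon between two distinct strands is an all--over bigon, hence directly removable by a II move, and this is why a stratified diagram with no self--crossings can be split into non--interacting layers. As written, the assembly step rests on a false premise and an unspecified ``coordinated sequence'' of moves.

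The second, structural, gap is the gluing of your two regions. In the complement of a tubular neighborhood of the flagpole, a strand of winding number $k$ is severed into $|k|+1$ arcs with endpoints on the boundary of that neighborhood, so it is not ``an arc joining a specified pair of endpoints with a specified winding number,'' and the uniqueness statement you invoke for unknotted, unlinked arcs in the solid torus does not apply to these pieces as stated. The cleaner way to organize the argument (and the way it is done in [GH2]) is to avoid cutting the ambient space: after matching the simple windings of the two diagrams along the flagpole (which your third paragraph does correctly, using minimality and condition (3)), work strand by strand in the order given by the stratification. Each strand has no self--crossings, lies entirely above all later strands, and has a prescribed sequence of simple windings occupying a prescribed interval of the flagpole; within its own layer it is therefore isotopic rel endpoints to a standard model, and stacking the layers in the common stratification order identifies the two diagrams. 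Your framing remark at the end is fine: since neither diagram has self--crossings, every strand has writhe zero in both, so no Reidemeister I moves are needed and the isotopy is regular.
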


The symmetric group $\S_n$  can be regarded as the subset of $(n,n)$--Brauer diagrams consisting of diagrams with only vertical strands.  $\S_n$
acts on ordinary or $\Z$-labelled $(n,n)$--Brauer diagrams on the left and on the right by the usual multiplication of diagrams,  that is,  by stacking diagrams.

We consider a particular family of permutations in $\S_n$.    Let $s$ be an integer,
$0 \le s \le n$,  with $s$ congruent to $n \mod 2$.  Write $f = (n-s)/2$.  Following Enyang
~\cite{Enyang2},  let 
\def\dfn{\mathcal D_{f,n}}
\def\enyang #1 #2{\mathcal D_{#1, #2}}
$\dfn$ be the set of permutations $\pi \in \S_n$ satisfying:
\begin{enumerate}
\item  If $i, j$  are even numbers with $2 \le i <  j \le 2f $,  then $\pi(i) < \pi(j)$.
\item  If  $i$ is odd with $1 \le i \le 2f-1$,  then $\pi(i) < \pi(i+1)$.
\item If $2f + 1 \le i < j \le n$,  then $\pi(i) < \pi(j)$.
\end{enumerate}
Then $\dfn$ is a complete set of left coset representatives of
$$((\Z_2 \times \cdots \times \Z_2) \rtimes \S_f)  \times \S_s  \subseteq \S_n,$$
where the $f$ copies of $\Z_2$  are generated by the transpositions $(2i-1, 2i)$ for
$1 \le i \le f$;   $\S_f$ permutes the $f$ blocks  $[2i-1, 2i]$ among themselves;  and
$\S_s$  acts on the last $s$ digits  $\{2f+1, \dots, n\}$.  

An element $\pi$ of $\dfn$ factors 
as $\pi =  \pi_1 \pi_2$,  where $\pi_2 \in  \enyang f f$, and $\pi_1$ is a $(2 f, s)$ shuffle; i.e., 
$\pi$ preserves the order of $\{1, 2, \dots, 2f\}$ and of $\{2f+1, \dots, 2f+s = n\}$.
Moreover,  $\ell(\pi) = \ell(\pi_1) + \ell(\pi_2)$.

For any $\Z$--Brauer diagram $D$, let $D_0$  denote the underlying ordinary Brauer diagram;  that is, $D_0$ is obtained from $D$ by forgetting the integer valued labels of the strands.   If $D$ is a $\Z$--Brauer diagram with exactly $s$ vertical strands, then
$D$ has a unique factorization 
\begin{equation} 
D =  \alpha \, d\ \, \beta\inv,  \label{equation: factorization of Brauer diagrams}
\end{equation}
where
 $\alpha$ and $\beta$ are elements of $\dfn$, and
 $d$  has underlying Brauer diagram of the form $d_0 = e_1 e_2\cdots e_{2f-1}  \pi,$  where $\pi$ is a permutation of $\{2f+1, \dots, n\}$.
This factorization is illustrated in Figure 
\ref{figure Brauer factorization}.

\begin{figure}[t]
$$\inlinegraphic[scale=.75]{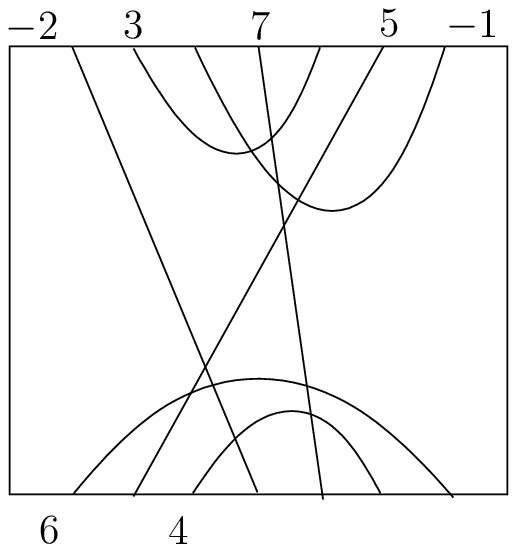} \quad = \inlinegraphic[scale=.75]{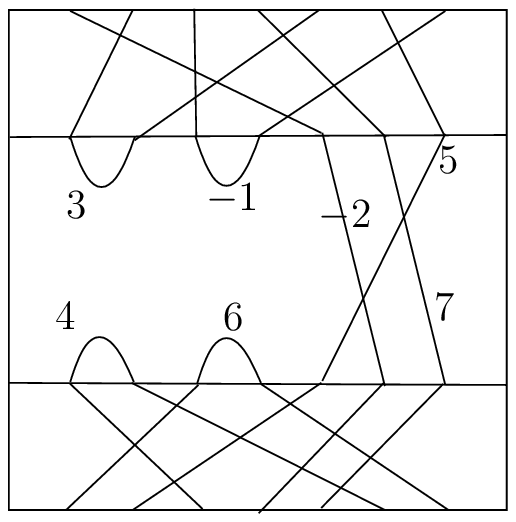}$$
\caption{Factorization of $\Z$--Brauer diagrams} \label{figure Brauer factorization}
\end{figure}

\def\Hom{{\rm Hom}}
\def\End{{\rm End}}
 It will be convenient to work in the affine BMW  category, that is,  the category whose objects are the natural numbers $0, 1, 2, \dots$,   with $\Hom(k, \ell)$ being the $S$--span of affine
 $(k, \ell)$--tangle diagrams, modulo Kauffman skein relations.  Let us introduce the elements  $\cup_i$ and $\cap_i$ which are the lower and upper half of $e_i$.
%See Figure  \ref{figure: elements of the tangle category}.

%\begin{figure}
$$
\cup_i =  \inlinegraphic{affine-cup_i}, \qquad
\cap_i = \inlinegraphic{affine-cap_i}.
$$
%\caption{Elements of the tangle category}  \label{figure: elements of the tangle category}
%\end{figure}

We collect several elementary observations.  Fix integers $n$ and $s$  with
$0 \le s \le n$ and $s \equiv n \mod 2$.  Set $f = (n-s)/2$.   Each of the following statements is justified by picture proofs.

\begin{lemma} \label{lemma:  identities in the tangle category} \mbox{}
\begin{enumerate}
\item  $e_{1} e_3 \cdots  e_{2f-1} =( \cap_{2f-1} \cdots  \cap_3  \cap_1)  (\cup_1 \cup_3 \cdots \cup_{2f-1})$.

\ignore{
\item  For $2f+1 \le k \le n$,    $(\cup_1 \cup_3 \cdots \cup_{2f-1}) x'_{k} =
x'_{k-2f}  (\cup_1 \cup_3 \cdots \cup_{2f-1})$.
}

\item  For $k$ odd, $1 \le k \le 2f-1$,     $(\cup_1 \cup_3 \cdots \cup_{2f-1}) x'_{k} =
  (\cup_1 \cup_3 \cdots \cup_{2f-1}) x_{k} $.

\item  For $k$ odd, $1 \le k \le 2f-1$,     $ x''_{k} (  \cap_{2f-1} \cdots  \cap_3  \cap_1  ) =
 x_{k}  ( \cap_{2f-1} \cdots  \cap_3  \cap_1) $.

  \item   If $\pi$ is a a permutation of $\{2f+1, 2f+2, \dots, n\}$, then
   $$(\cup_1 \cup_3 \cdots \cup_{2f-1}) g_\pi=
 g_{\tilde \pi} (\cup_1 \cup_3 \cdots \cup_{2f-1}),$$
  where  $\tilde \pi$ is the permutation of $\{1, 2, \dots, s\}$ defined by 
  $\tilde \pi(j)  = \pi(j + 2f) - 2f$.  More generally,  if $T$ is an ordinary tangle on the
  strands $\{2f+1, 2f+2, \dots, n\}$, then
    $$(\cup_1 \cup_3 \cdots \cup_{2f-1})\  T=
\tilde T \ (\cup_1 \cup_3 \cdots \cup_{2f-1}),$$
where $\tilde T$ is the shift of $T$ to the strands $\{1, 2, \dots, s\}$.

\item  For $1 \le k \le s$,    $(\cup_1 \cup_3 \cdots \cup_{2f-1}) x_{k+ 2 f} =
x_{k}  (\cup_1 \cup_3 \cdots \cup_{2f-1})$.

\end{enumerate}
\end{lemma}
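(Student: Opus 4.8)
The plan is to verify each identity directly in the affine BMW category, representing both sides by affine tangle diagrams and connecting them through regular isotopy together with the local relations of Definition \ref{definition affine BMW} (the inverse relation $g_ig_i\inv=1$, the untwisting relations $g_ie_i=e_ig_i=\rho\inv e_i$, and the crossing relation). Two structural observations organize the work. First, the anti-automorphism $*$, which flips a diagram about a horizontal axis, interchanges $\cup_i\leftrightarrow\cap_i$, fixes $x_k$, and sends $x'_k\mapsto x''_k$ (reverse the defining word of $x'_k$ and use that $*$ fixes each generator); since $*$ reverses products, it carries identity (2) to identity (3). Thus it suffices to treat (1), (2), (4), and (5). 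Second, throughout I abbreviate $U=\cup_1\cup_3\cdots\cup_{2f-1}$, the morphism that caps the strands at positions $1,\dots,2f$ in adjacent pairs and lets the strands at positions $2f+1,\dots,n$ pass through to positions $1,\dots,s$.

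Identities (1) and (4) are planar and use no skein relations. For (1), each $e_{2i-1}$ factors as $\cap_{2i-1}\cup_{2i-1}$, and because the indices $1,3,\dots,2f-1$ are pairwise non-adjacent the various cups and caps occupy disjoint vertical bands; I would regroup the product so that all caps precede all cups by an isotopy that creates no crossings, obtaining $(\cap_{2f-1}\cdots\cap_1)(\cup_1\cup_3\cdots\cup_{2f-1})$. For (4), an ordinary tangle $T$ supported on the strands $\{2f+1,\dots,n\}$ lies entirely to the right of the capped band of $U$; since the two regions are disjoint, $T$ slides past the cups, and the relabelling $2f+j\mapsto j$ is exactly the shift $\tilde T$, giving $UT=\tilde T\,U$. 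The permutation-braid case $T=g_\pi$ is the first assertion.

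For (2), I would localize at the top of the diagram. Writing $x_k=g_{k-1}\cdots g_1\,x_1\,g_1\cdots g_{k-1}$ and $x'_k=g_{k-1}\cdots g_1\,x_1\,g_1\inv\cdots g_{k-1}\inv$, the two differ only in the braid adjacent to $U$, and because $k$ is odd with $k\le 2f-1$ the cups $\cup_1,\cup_3,\dots,\cup_k$ are all present. Pushing the differing braid up against these cups and applying the untwisting and inverse relations lets each crossing be absorbed into a cup, reducing both the positive word $g_1\cdots g_{k-1}$ and the negative word $g_1\inv\cdots g_{k-1}\inv$ to the same diagram; hence $Ux'_k=Ux_k$. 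The one subtlety is to confirm that no net framing factor $\rho^{\pm1}$ survives on either side, which I would read off from the untwisting relation. Identity (3) then follows by applying $*$ to (2).

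The main obstacle is (5), where the winding strand genuinely interacts with the capped band. In $x_{k+2f}$ the strand at position $k+2f$ is carried leftward past all of positions $1,\dots,2f$, wraps once around the flagpole, and returns, so it crosses each of the capped strands twice, once on entry and once on exit. Capping those strands in pairs by $U$ joins each pair into a single arc at the top, and the bight of the winding strand lying over such an arc can be lifted across it by regular isotopy, cancelling both crossings with that pair. After removing in this way all crossings with positions $1,\dots,2f$, the winding strand crosses only positions $2f+1,\dots,k+2f-1$ and encircles the flagpole, which under $2f+j\mapsto j$ is precisely $x_k$ on $s$ strands; thus $Ux_{k+2f}=x_kU$, the winding analogue of (4). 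The delicate point is to check that these slides neither create nor destroy windings about the flagpole and introduce no spurious crossings, so that the surviving diagram is the shifted winding exactly and not merely isotopic to it up to a correction term; I expect to confirm this by tracking the winding number of the strand through each slide, using the bookkeeping of the connector map $c$.
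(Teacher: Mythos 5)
The paper offers no written argument for this lemma beyond the sentence ``Each of the following statements is justified by picture proofs,'' so your plan of checking each identity diagrammatically is exactly what is intended, and most of what you write is sound: (1) and (4) are the planar observations you describe; the reduction of (3) to (2) via $*$ is correct (indeed $x_k^*=x_k$, $(x'_k)^*=x''_k$, and $(\cup_1\cup_3\cdots\cup_{2f-1})^*=\cap_{2f-1}\cdots\cap_3\cap_1$); and your unlinking argument for (5) is the right mechanism --- the winding strand meets each capped pair in four crossings, passing both legs of the capped arc consistently on the outward journey and consistently on the return, so it never pierces the disc that the arc bounds and can be slid free of it.

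Your argument for (2), however, does not work as stated. First, in the product $(\cup_1\cdots\cup_{2f-1})\,x'_k$ the braid in which $x'_k$ and $x_k$ differ, namely $g_1\cdots g_{k-1}$ versus $g_1^{-1}\cdots g_{k-1}^{-1}$, sits at the end of the word \emph{away} from the cups; the block adjacent to the cups is the common factor $g_{k-1}\cdots g_1$ (followed by $x_1$), so there is no ``differing braid adjacent to $U$'' available to absorb. Second, and more seriously, the crossings in these words are crossings of the winding strand (which is capped to strand $k+1$) with the strands $1,\dots,k-1$, which belong to \emph{other} caps; the untwisting relation $\cup_j\,g_j^{\pm1}=\rho^{\mp1}\cup_j$ removes only a crossing between the two legs of a single cap, so it applies to none of these crossings --- and if it did, the positive and negative words would acquire the \emph{different} scalars $\rho^{-(k-1)}$ and $\rho^{k-1}$, so the ``subtlety'' you defer is in fact a symptom that the mechanism is wrong. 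The correct picture proof of (2) is the same unlinking argument you use for (5): because $k$ is odd, the strands $1,\dots,k-1$ are capped among themselves in pairs; in $x'_k$ the excursion of the capped strand $k$ passes every such arc on the same side on both journeys, while in $x_k$ it passes them all on one side going out and all on the other side coming back --- in either case each pass crosses the two legs of an arc consistently, so the excursion links none of the arcs and can be pulled free of all of them, reducing both $(\cup_1\cdots\cup_{2f-1})x'_k$ and $(\cup_1\cdots\cup_{2f-1})x_k$ to the identical diagram with no power of $\rho$ appearing. With (2) repaired in this way, (3) follows by $*$ exactly as you say.
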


Now we can obtain a lifting of $\Z$--Brauer diagrams to affine tangle diagrams that are flagpole descending  and stratified,  using the factorization
of Equation (\ref{equation: factorization of Brauer diagrams}).  
Let $D$ be a $\Z$--Brauer diagram with exactly $s$ vertical strands. Set $f = (n-s)/2$.
  Consider the  factorization $D = \alpha \, d  \, \beta\inv,$  where
$\alpha, \beta \in \dfn$, and $d_0 = e_1 \cdots e_{2f-1} \pi$,  with $\pi$ a permutation
of $\{2f+1, \dots, n\}$.

 First,  there is a unique (up to regular isotopy) stratified ordinary $(n, n)$--tangle diagram $T_{d_0}$  without closed loops or self--crossings of strands with Brauer diagram
$$
c(T_{d_0}) = d_0 =  e_1 e_3 \cdots e_{2f-1}  \pi,
$$
namely 
\begin{equation}
\begin{split}
T_{d_0} &= e_1 e_3 \cdots e_{2f-1}  g_\pi  = ( \cap_{2f-1} \cdots  \cap_3  \cap_1) (\cup_1 \cup_3 \cdots \cup_{2f-1}) g_\pi \\
&=  ( \cap_{2f-1} \cdots  \cap_3  \cap_1) g_{\tilde \pi}  (\cup_1 \cup_3 \cdots \cup_{2f-1}),\\
\end{split}
\end{equation}
  where  $\tilde \pi$ is the permutation of $\{1, 2, \dots, s\}$ defined by 
  $\tilde \pi(j)  = \pi(j + 2f) - 2f$. 

Next, we set  
\begin{equation} \label{equation: definition of Td}
\begin{split}
T'_{d} &=  (x''_1)^{a_1} \cdots (x''_{2f-1})^{a_{2f-1}} ( \cap_{2f-1} \cdots  \cap_3  \cap_1)  g_{\tilde \pi}   (x''_{s})^{b_{s}} \cdots     (x''_1)^{b_1} \\
&\phantom{ = (x_n')^{a_n}xxxx}   (\cup_1 \cup_3 \cdots \cup_{2f-1})  ( x_{2f-1}')^{c_{2f-1}} \cdots  ( x_1')^{c_1},
\end{split}
\end{equation}
where the exponents are determined as follows:   
\begin{enumerate}
\item For $i$ odd, $i \le 2f-1$,  if $d$  has a strand beginning
at $\p i$  with label $k$,  then $c_i = k$;  otherwise $c_i = 0$.   
\item For $i \ge 2f+1$,  if  $d$  has a strand beginning
at $\p i$  with label $k$,  then $b_{i-2f} = k$;  otherwise $b_{i-2f} = 0$.  
\item For $i$ odd, $i \le 2f-1$,  if $d$  has a horizontal strand ending at 
$\pbar i$ with label $k$,  then  $a_i = k$;  otherwise,  $a_i = 0$.
\end{enumerate}

Finally, we set 
\begin{equation} \label{equation: definition of UprimeD}
T'_D =  g_\alpha \ T'_d  \  (g_\beta)^*,
\end{equation}
endowed with the verticals--second orientation.

\begin{example}  For the $\Z$--Brauer diagram $D$ illustrated in Figure \ref{figure Brauer factorization},  $T'_D$ is illustrated in  Figure \ref{figure: lifting 1}  (where the winding number of the strands are indicated by the integers written at the left of the figure.)
We have
\begin{equation*}
\begin{split}
T'_{d}  &=    (x''_1)^4  (x''_3)^6(\cap_1 \cap_3) g_1 g_2  (x''_3)^{5} (x''_2)^{7}  (x''_1)^{-2}   (\cup_1 \cup_3) 
 (x'_3)^{-1} (x'_1)^3 \\
\end{split}
\end{equation*}
\end{example}

\begin{figure}[ht]
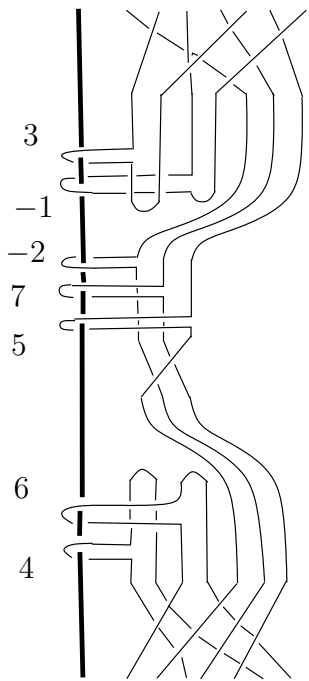

\centerline{$\inlinegraphic{Td-example}$}
\caption{Lifting of an affine Brauer diagram} \label{figure: lifting 1}
\end{figure}

\begin{lemma}  \label{lemma:  UprimeD properties}

$T'_D$ is     flagpole descending and stratified,  and has $\Z$--Brauer diagram equal to $D$.
\end{lemma}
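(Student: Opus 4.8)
\emph{Proof plan.} The plan is to verify the three assertions—$c(T'_D) = D$, that $T'_D$ is stratified, and that $T'_D$ is flagpole descending—in turn. The key simplification is that $T'_D = g_\alpha\, T'_d\, (g_\beta)^*$ is built from the core factor $T'_d$ by composing on top and bottom with the positive permutation braids $g_\alpha$ and $(g_\beta)^*$ representing the shuffles $\alpha, \beta \in \dfn$; all of their crossings are positive and lie to the right of the flagpole, and they do not cross the flagpole, so they leave the simple windings of $T'_d$ intact. This lets me compute the connector multiplicatively and treat the two braids as outer positive layers that extend the stratification order and preserve the flagpole structure. Note also that $T'_D$ has no closed loops, so its connector is defined.

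First I would compute the connector. Since $c$ is multiplicative on affine tangle diagrams without closed loops, and $c(g_\alpha) = \alpha$ while $c((g_\beta)^*) = \beta\inv$ as $\Z$--Brauer diagrams (flipping a positive permutation braid reverses its permutation and carries only zero winding labels), it suffices to prove $c(T'_d) = d$. The underlying Brauer diagram of $T'_d$ equals $d_0 = e_1 e_3 \cdots e_{2f-1}\pi$: the factors $(x''_i)^{a_i}$, $(x''_j)^{b_j}$, and $(x'_k)^{c_k}$ are simple windings on individual strands and do not change connectivity, while the cup--cap--braid skeleton realizes $d_0$ by Lemma~\ref{lemma:  identities in the tangle category}(1) and (4). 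For the winding--number labels I would trace each strand of $d$—top horizontal, vertical, or bottom horizontal—and use Lemma~\ref{lemma:  identities in the tangle category}(2), (3), (5) to slide the relevant winding factor through the intervening cups and caps onto that strand; the exponent rules (1)--(3) are arranged exactly so that each strand of $T'_d$ acquires the winding number of the corresponding strand of $d$.

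Next, stratification. The strands of $T'_D$ have no self--crossings, since the simple windings and the positive permutation braids $g_\alpha$, $g_{\tilde\pi}$, $g_\beta$ are individually free of self--crossings; hence condition (2) of the definition of stratified holds automatically, and it remains to check that the verticals--second stratification order is a valid stratification order. All crossings are positive and none lie to the left of the flagpole, so comparing the three classes of strands pairwise—top horizontals, verticals (taken in reverse order of their initial vertices), and bottom horizontals—one verifies by inspection that whenever strand $s$ precedes strand $t$ in this order, every crossing between them has $s$ passing over $t$. This is a picture verification resting on the positivity of the braids and the placement of the cups and caps.

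The principal obstacle is the flagpole--descending condition. I would first check that $T'_D$ is in standard position: all crossings are to the right of the flagpole, and in a neighborhood of the flagpole the diagram is a non--nested, crossing--free union of the simple windings contributed by the $x'$ and $x''$ factors. The crucial point is condition (3), that successive flagpole crossings descend: as $T'_D$ is traversed in the verticals--second orientation—top horizontal strands first, then verticals, then bottom horizontals—the windings are met in order from the top of the flagpole to the bottom, and the choice between $x'$ and $x''$ (under-- versus over--crossing the intervening strands) is exactly what makes each winding sit at the correct height and cross the flagpole in the descending sense. Finally, minimality (condition (1)) holds because the exponent rules (1)--(3) assign to each strand of $d$ a single winding factor, so that strand carries $|a_i|$, $|b_j|$, or $|c_k|$ simple windings all of one sign, with no cancellation; the total number of simple windings is therefore $\sum_i|a_i| + \sum_j|b_j| + \sum_k|c_k|$, which is the minimum compatible with the winding--number labels of $D$, since each simple winding of a standard--position diagram changes a strand's winding number by exactly $\pm 1$. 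Combining the three parts yields the lemma.
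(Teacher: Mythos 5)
Your verification is correct and takes the approach the paper intends: the paper's own proof of this lemma is the single word ``Straightforward,'' and the routine diagrammatic inspection you spell out---computing the connector multiplicatively from the factorization $T'_D = g_\alpha\, T'_d\, (g_\beta)^*$, checking that all crossings are positive and suitably layered for the verticals--second stratification order, and bounding the number of simple windings from below by the regular-isotopy invariance of the strandwise winding numbers to get minimality---is exactly the check the paper omits. I see no gaps.
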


\begin{proof}  Straightforward.
\end{proof}

\begin{proposition} \label{proposition:  U-prime spanning set of affine algebra}
$\U' =  \{ T'_D : D \text{ is a }  \Z  \text {--Brauer diagram}\}$ spans  $\abmw {n, S}$.
\end{proposition}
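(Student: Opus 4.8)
The plan is to combine the spanning result of Proposition~\ref{proposition:  flagpole descending verticals--second span} with the rigidity result of Lemma~\ref{lemma:  isotopic flagpole descending diagrams}, using Lemma~\ref{lemma:  UprimeD properties} to recognize each relevant tangle diagram as an element of $\U'$. The construction of $T'_D$ in Equations~(\ref{equation: definition of Td}) and (\ref{equation: definition of UprimeD}) has already been rigged so that $T'_D$ is a flagpole descending, stratified lift of $D$; so all that remains is to run the spanning and uniqueness machinery.

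First I would invoke Proposition~\ref{proposition:  flagpole descending verticals--second span}, which asserts that $\abmw{n, S}$ is spanned by affine tangle diagrams without closed loops that are flagpole descending and stratified. Consequently it suffices to show that every such diagram $T$ coincides, as an element of $\abmw{n, S}$, with some $T'_D \in \U'$. Given such a $T$, the natural candidate is $D := c(T)$, the $\Z$--Brauer diagram produced by the connector map. Since $T$ has no closed loops, $c(T)$ is a well-defined $\Z$--Brauer diagram (with some number $s$ of vertical strands, $s \equiv n \mod 2$), so $T'_D$ is defined by the construction above. By Lemma~\ref{lemma:  UprimeD properties}, $T'_D$ is itself flagpole descending and stratified, with $\Z$--Brauer diagram $c(T'_D) = D = c(T)$.

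Now both $T$ and $T'_D$ are affine tangle diagrams without closed loops, both flagpole descending and stratified, and carrying the same $\Z$--Brauer diagram. By Lemma~\ref{lemma:  isotopic flagpole descending diagrams} they are regularly isotopic, and hence represent the same element of $\abmw{n, S}$. Thus $T = T'_D \in \U'$ in $\abmw{n, S}$; since the diagrams $T$ span the algebra, so does $\U'$.

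I expect no serious obstacle here, as the real work has been front-loaded into Proposition~\ref{proposition:  flagpole descending verticals--second span} and the two lemmas; this proposition is essentially a bookkeeping step that threads them together. The only point deserving a moment's care is to confirm that the assignment $D \mapsto T'_D$ accepts \emph{every} $\Z$--Brauer diagram arising as $c(T)$ for a spanning diagram $T$ — that is, that no hidden constraint on $D$ beyond being a $\Z$--Brauer diagram on $n$ top and $n$ bottom vertices is being assumed. This is immediate from the definition, since $T'_D$ is specified uniformly for an arbitrary such $D$ via its factorization $D = \alpha\, d\, \beta\inv$ of Equation~(\ref{equation: factorization of Brauer diagrams}).
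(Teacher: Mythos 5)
Your argument is correct and is exactly the paper's proof, which simply cites Proposition~\ref{proposition:  flagpole descending verticals--second span}, Lemma~\ref{lemma:  isotopic flagpole descending diagrams}, and Lemma~\ref{lemma:  UprimeD properties}; you have merely spelled out how those three ingredients thread together. No discrepancy to report.
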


\begin{proof}  Follows from Proposition   \ref{proposition:  flagpole descending verticals--second span}, Lemma    \ref{lemma:  isotopic flagpole descending diagrams},  and Lemma    \ref{lemma:  UprimeD properties}.
\end{proof}

\begin{proposition}  \label{theorem: U-prime basis of affine algebra}
$\U' = \{ T'_D : D \text{ is a }  \Z  \text {--Brauer diagram}\}$ is a basis of $\abmw {n, S}$.
\end{proposition}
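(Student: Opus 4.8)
The plan is to upgrade the spanning statement of Proposition~\ref{proposition: U-prime spanning set of affine algebra} to a basis statement by proving that $\U'$ is linearly independent over $S$. The starting observation is that $\U'$ is parametrized bijectively by the $\Z$--Brauer diagrams: Lemma~\ref{lemma: UprimeD properties} gives $c(T'_D)=D$, so $D\mapsto T'_D$ is injective, while Lemma~\ref{lemma: isotopic flagpole descending diagrams} shows $T'_D$ is the unique flagpole descending stratified diagram without closed loops having connector $D$. Thus $\U'$ has one element per $\Z$--Brauer diagram, the same set indexing the basis of $\abmw{n,S}$ built in~\cite{GH2}; so it suffices to prove independence.

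I would prove independence by filtering $\abmw{n,S}$ by rank. For $0\le s\le n$ with $s\equiv n\bmod 2$, let $A^{\le s}$ be the $S$--span of the affine tangle diagrams of rank at most $s$. Each $A^{\le s}$ is a two--sided ideal, $A^{\le n}=\abmw{n,S}$, $A^{\le n-2}=I_n=\ker p$, and the connector map respects the filtration, since a diagram of rank $s$ has exactly $s$ vertical strands and $T'_D\in A^{\le s}\setminus A^{\le s-2}$ precisely when $D$ has $s$ vertical strands. Given a finite dependence $\sum_D\lambda_D T'_D=0$, let $s_0$ be the largest number of vertical strands occurring with nonzero coefficient and pass to $A^{\le s_0}/A^{\le s_0-2}$; every term with fewer vertical strands dies, leaving a dependence among the images of the $T'_D$ that have exactly $s_0$ vertical strands. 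If each such layer is independent in its subquotient, these $\lambda_D$ vanish and one descends, clearing the whole dependence in finitely many steps.

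The core is thus the independence, for each $s$, of $\{\overline{T'_D}:D\text{ has }s\text{ vertical strands}\}$ in $A^{\le s}/A^{\le s-2}$. Using the factorization $D=\alpha\,d\,\beta\inv$ of~\eqref{equation: factorization of Brauer diagrams} with $\alpha,\beta\in\mathcal D_{f,n}$ and the cup--cap identities of Lemma~\ref{lemma: identities in the tangle category}, I would rewrite $T'_D$ modulo $A^{\le s-2}$ as $g_\alpha\,(\cap_{2f-1}\cdots\cap_1)\,h\,(\cup_1\cdots\cup_{2f-1})\,(g_\beta)^*$, where the caps and cups carry the winding labels $a_i$, $c_i$ and $h$ is a lift of $\tau_{\tilde\pi}\prod_j\bigl(p(x''_j)\bigr)^{b_j}$ on the $s$ through--strands. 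Presenting $A^{\le s}/A^{\le s-2}$ as a generalized matrix algebra over $\ahec{s,S}(q^2)$ via the Jones basic construction, with matrix units indexed by the winding--decorated cup and cap half--diagrams, the images $\overline{T'_D}$ become elementary tensors (left half)$\,\otimes\,\tau_{\tilde\pi}\prod_j(p(x''_j))^{b_j}\,\otimes\,$(right half). Distinct $\Z$--Brauer diagrams with $s$ vertical strands give distinct tensors, by the uniqueness of the factorization~\eqref{equation: factorization of Brauer diagrams}, so independence reduces to the fact that $\{\tau_\sigma\prod_j(p(x''_j))^{b_j}\}$ is an $S$--basis of $\ahec{s,S}(q^2)$. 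For $s=n$ the half--diagrams are absent and this is the whole statement; in general it follows by a triangular comparison with the standard basis $\{\tau_\sigma\boldt^b\}$, using that $p$ and the section $t$ are mutually inverse modulo lower rank.

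The step I expect to be the main obstacle is this associated--graded analysis. One must verify that rewriting the intermediate monomials in $x''_j$ and $x'_j$ occurring in $T'_D$ by means of the skein relation $g_i-g_i\inv=(q\inv-q)(e_i-1)$ introduces only corrections of strictly smaller rank, so that higher layers cannot leak into the comparison, and one must make precise the identification of $A^{\le s}/A^{\le s-2}$ with matrices over $\ahec{s,S}(q^2)$, with matrix units matched to the winding--decorated cup and cap half--diagrams. Together with the triangular comparison showing that the $p(x''_j)$--monomials form a basis of the affine Hecke algebra, this identification is the technical heart; once it is in place, the rank filtration of the previous paragraphs completes the proof.
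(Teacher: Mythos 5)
Your overall architecture differs from the paper's: the paper disposes of this proposition in one line by deferring to the proof of Theorem 2.25 of \cite{GH2}, where linear independence of such a lifted spanning set is obtained by comparison with the affine Kauffman tangle algebra and, ultimately, an external faithfulness input (conditional expectations down to the $0$--strand algebra, whose structure is known from Turaev's computation of the skein module of the solid torus). Your reduction to the associated graded of the rank filtration is a reasonable and more structural route, and your refusal to argue by cardinality is correct, since $\abmw{n,S}$ is free of infinite rank. The bijective parametrization of $\U'$ by $\Z$--Brauer diagrams and the descent through the filtration $A^{\le s}$ are also fine.

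The genuine gap is the step you yourself flag as the ``technical heart'': you \emph{assert} a presentation of $A^{\le s}/A^{\le s-2}$ as a generalized matrix algebra over $\ahec{s,S}(q^2)$ with matrix units indexed by winding--decorated cup and cap half--diagrams, and then read off independence of the elementary tensors. But a priori the subquotient is only a \emph{quotient} of that matrix algebra: the spanning map from the free module on triples (cap half--diagram, Hecke element, cup half--diagram) onto $A^{\le s}/A^{\le s-2}$ is surjective by the spanning results, and its injectivity is precisely the linear independence you are trying to prove. Establishing the iterated--inflation structure for BMW--type algebras always requires an independent faithfulness input --- separating functionals built from conditional expectations and the known structure of the $0$-- and $1$--strand algebras, a faithful representation, or a unitriangular comparison with an already--established basis such as the $x'_j$--monomial basis of \cite{GH2} (which would in fact be the shortest honest route here). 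As written, your argument assumes the freeness of the layers in order to prove the freeness of the algebra, so the circle must be broken by supplying one of these inputs before the reduction to the Hecke--algebra basis statement is legitimate.
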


\begin{proof}  Essentially the same as the proof of Theorem 2.25
%\ref{I-theorem:  basis of affine KT} 
in ~\cite{GH2}.
\end{proof}

Now fix an integer $r \ge 1$.   Let $\U'_r$ be the set of $U'_D \in \U'$ such that the
integer valued labels on the strands of $D$ are restricted to lie in the interval
$0 \le k \le r-1$.  Equivalently, the exponents of the $x'_j$, $x''_j$ appearing in $U'_D$ are restricted to be in the same interval of integers.

\begin{proposition} \label{proposition: U-prime spanning set of cyclotomic algebra}
 The cyclotomic BMW algebra
$\bmw{n, S, r}(u_1,  \dots, u_r)$  is spanned over $S$ by $\U'_r$.
\end{proposition}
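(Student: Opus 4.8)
The plan is to exploit that $\U'$ is a basis of $\abmw{n,S}$ (Proposition \ref{theorem: U-prime basis of affine algebra}), so that the images of the $T'_D$ automatically span the quotient $\bmw{n,S,r}$. It then suffices to show that every $T'_D$ whose $\Z$--Brauer diagram $D$ carries a strand label outside the interval $[0,r-1]$ is congruent, modulo the cyclotomic ideal, to an $S$--linear combination of elements $T'_{D'}\in\U'_r$.

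First I would set up the reduction relations. Each of $x_j'$ and $x_j''$ is a conjugate of $x_1$ by an invertible element: indeed $x_j' = k\,x_1\,k^{-1}$ with $k = g_{j-1}\cdots g_1$, and $x_j'' = h\,x_1\,h^{-1}$ with $h = g_{j-1}^{-1}\cdots g_1^{-1}$ (so that $h^{-1} = g_1\cdots g_{j-1}$). Conjugating the defining relation $\prod_{i=1}^r(y_1 - u_i)=0$ (with $y_1 = \rho x_1$), which holds in $\bmw{n,S,r}$, yields $\prod_{i=1}^r(\rho x_j' - u_i)=0$ and likewise for $x_j''$. Expanding gives a monic degree--$r$ relation whose constant coefficient is $(-1)^r\varepsilon_r(u_1,\dots,u_r) = (-1)^r u_1\cdots u_r$, an invertible element of $S$. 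Hence every power $(x_j')^m$ with $m\ge r$ rewrites as an $S$--combination of $(x_j')^0,\dots,(x_j')^{r-1}$, and invertibility of the constant term lets one solve for $(x_j')^{-1}$ as such a combination too; by induction every integer power $(x_j')^m$ lies in $\mathrm{span}_S\{(x_j')^k : 0\le k\le r-1\}$ modulo the cyclotomic ideal, and the same holds for $x_j''$.

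Next I would feed these relations into the explicit form of $T'_D$. By Equations \eqref{equation: definition of Td} and \eqref{equation: definition of UprimeD}, $T'_D$ is a product in which each strand label of $D$ occurs as the exponent of exactly one factor $(x_i')^{c_i}$, $(x_i'')^{a_i}$, or $(x_i'')^{b_i}$ (according to the rules defining $a_i,b_i,c_i$), while all remaining factors ($g_\alpha$, the cups and caps, $g_{\tilde\pi}$, $(g_\beta)^*$) are independent of that label. Isolating one such factor, write $T'_D = A\,(x_j')^{c}\,B$ and replace $(x_j')^{c}$ by its reduction $\sum_{k=0}^{r-1}\lambda_k (x_j')^k$; distributivity gives $T'_D \equiv \sum_k \lambda_k\, A\,(x_j')^k\,B$ modulo the cyclotomic ideal. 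Since $A$ and $B$ depend only on the other, unchanged labels, each term $A\,(x_j')^k\,B$ is exactly $T'_{D'}$ for the $\Z$--Brauer diagram $D'$ obtained from $D$ by replacing that single label with $k\in[0,r-1]$. Thus one out-of-range label is brought into range at the cost of passing to a finite $S$--combination of liftings, all other labels being preserved.

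Finally I would iterate: because reducing one label leaves the others untouched, finitely many such steps bring every label of every diagram that occurs into $[0,r-1]$, expressing $T'_D$ modulo the cyclotomic ideal as an $S$--combination of elements of $\U'_r$; since $\U'$ spans $\abmw{n,S}$, this gives the claim. The only delicate point is the first step, namely checking that $x_j'$ and $x_j''$ really are conjugates of $x_1$ and that the constant term of the conjugated cyclotomic polynomial is invertible (this is what permits reduction of \emph{negative} exponents, not just high positive ones); granting that, the remaining steps are routine bookkeeping with the product formula for $T'_D$.
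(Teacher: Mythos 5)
Your argument is correct and is essentially the argument the paper relies on (it simply defers to the proof of Proposition 3.6 of \cite{GH2}): since each $x_j'$ and $x_j''$ is conjugate to $x_1$, the cyclotomic relation forces every integer power of these elements into the span of the powers $0,\dots,r-1$ (invertibility of the $u_i$ handling negative exponents), and since each strand label of $D$ appears as the exponent of exactly one factor in $T'_D$, the reduction converts $T'_D$ into a combination of elements of $\U'_r$. No gaps.
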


\begin{proof}  Same as the proof of Proposition 3.6
%\ref{I-proposition: finite spanning set} 
in ~\cite{GH2}.
\end{proof}

\begin{proposition} \label{theorem: U-prime basis of cyclotomic algebra} 
 For any  integral domain $S$ with admissible parameters, $\U'_r$ is an $S$--basis of the
 cyclotomic BMW algebra
$\bmw{n, S, r}(u_1,  \dots, u_r)$.
\end{proposition}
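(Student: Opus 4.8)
The plan is to prove this by a dimension count, combining the spanning property already in hand with the freeness statement of the cyclotomic isomorphism theorem. Since $S$ is an admissible integral domain, Theorem~\ref{theorem: cyclotomic isomorphism} guarantees that $\bmw{n, S, r}$ is a free $S$--module of rank $r^n(2n-1)!!$, and Proposition~\ref{proposition: U-prime spanning set of cyclotomic algebra} guarantees that $\U'_r$ spans it. The entire content of the proof therefore reduces to two points: that $\U'_r$ has exactly $r^n(2n-1)!!$ elements, and that a spanning set of this cardinality in a free module of rank $r^n(2n-1)!!$ is automatically a basis.

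For the count, I would observe that the elements $T'_D$ of $\U'_r$ are indexed by $\Z$--Brauer diagrams $D$ with $n$ strands whose strand labels all lie in $\{0,1,\dots,r-1\}$. The underlying ordinary Brauer diagram is a perfect matching of the $2n$ boundary vertices, and there are $(2n-1)!!$ of these; each of the $n$ resulting strands may then be assigned, independently, one of $r$ admissible labels. Hence there are at most $r^n(2n-1)!!$ such diagrams, so that $|\U'_r| \le r^n(2n-1)!!$.

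The concluding step is the standard fact that, over an integral domain, a spanning set of cardinality at most the rank of a finitely generated free module must be a basis. I would argue this by passing to the field of fractions $K = \mathrm{Frac}(S)$: since $\bmw{n,S,r}$ is free, it is torsion--free and embeds in $\bmw{n,S,r}\otimes_S K$, a $K$--vector space of dimension $r^n(2n-1)!!$. The image of $\U'_r$ spans this space and has at most $r^n(2n-1)!!$ elements, so it is a $K$--basis; in particular $\U'_r$ is $K$--linearly independent, hence $S$--linearly independent. Together with the spanning property over $S$, this shows $\U'_r$ is an $S$--basis. (The affine analogue, Proposition~\ref{theorem: U-prime basis of affine algebra}, is proved along the same lines from the corresponding freeness statement for $\abmw{n,S}$.)

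I expect no genuine difficulty of depth here, only a bookkeeping point: one must know that the cardinality bound is sharp, i.e.\ that distinct restricted $\Z$--Brauer diagrams are not collapsed by $D \mapsto T'_D$. This comes out for free from the argument itself, since a free module of rank $N$ over a nonzero commutative ring cannot be spanned by fewer than $N$ elements; the spanning property thus forces $|\U'_r| \ge r^n(2n-1)!!$, which matches the upper bound from the count and pins the cardinality down exactly.
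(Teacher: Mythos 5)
Your proof is correct, but it takes a different route from the paper. The paper disposes of this proposition in one line, asserting that the argument is the same as that of Theorem~5.5 of \cite{GH2}; that argument establishes linear independence directly (working through the cyclotomic Kauffman tangle algebra and the machinery built up in \cite{GH2} to separate the candidate basis elements), and it is the route by which the rank $r^n(2n-1)!!$ was originally obtained. You instead take Theorem~\ref{theorem: cyclotomic isomorphism} --- freeness of rank $r^n(2n-1)!!$, which this paper quotes as known from \cite{GH2, GH3, Yu-thesis, Wilcox-Yu2} --- as a black box and combine it with the spanning statement of Proposition~\ref{proposition: U-prime spanning set of cyclotomic algebra} and a cardinality count. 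This is not circular within the present paper, since the freeness theorem was proved there via a different basis, and your count is right: with the verticals--second orientation fixed, a restricted $\Z$--Brauer diagram is an ordinary Brauer diagram ($(2n-1)!!$ of them) together with a label in $\{0,\dots,r-1\}$ on each of its $n$ strands, giving at most $r^n(2n-1)!!$ elements of $\U'_r$; the fraction-field argument then correctly upgrades a spanning set of cardinality at most the rank to a basis, and the a~priori injectivity of $D\mapsto T'_D$ does indeed fall out rather than needing to be checked. What your approach buys is economy --- no topological separation argument is repeated --- at the cost of leaning entirely on the externally quoted freeness theorem; the paper's (deferred) argument is the one that actually produces that freeness in the first place, so the two proofs sit at different levels of self-containment. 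The same comparison applies to your parenthetical remark about the affine case, Proposition~\ref{theorem: U-prime basis of affine algebra}, where the paper likewise defers to \cite{GH2}.
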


\begin{proof}  Same as the proof of Theorem 5.5
%\ref{I-theorem: basis theorem over admissible integral domain} 
in ~\cite{GH2}.
\end{proof}

\begin{remark}  \label{remark:  stratified flagpole descending k-ell diagrams}

It is straighforward to generalize the content of this section to
affine $(k, \ell)$--tangle diagrams.  The notions of standard position, orientation, and stratification, and in particular the verticals--second orientation and stratification extend to
affine $(k, \ell)$--tangle diagrams.  Likewise, the notion of flagpole descending extends.  \ignore{Let $\abmw{}(k, \ell)$  denote the $S$--module of 
affine $(k, \ell)$--tangle diagrams,  modulo Kauffman skein relations and closed loop relations.  Then the analogue of Proposition \ref{proposition:  flagpole descending verticals--second span}  holds for $\abmw{}(k, \ell)$. }  

Define $(k, \ell)$--connectors to be ``Brauer diagrams"  with $k$ upper vertices and $\ell$ lower vertices, and likewise define $\Z$--weighted $(k, \ell)$--connectors. We can extend the definition of the connector map $c$  to a map from oriented affine $(k, \ell)$--tangle diagrams without closed loops to 
$\Z$--weighted $(k, \ell)$--connectors.  Then the analogue of Lemma \ref{lemma:  isotopic flagpole descending diagrams} holds.

Other results in this section can also be generalized, but we 
 we will  need only a weak version   of Propositon \ref{proposition:  U-prime spanning set of affine algebra},  and only for affine $(0, 2 f)$--tangle diagrams.
Consider the set of affine $(0, 2 f)$--tangle diagrams of the form
\begin{equation} \label{equation: canonical form for 0-2f tangles}
g_\alpha (x''_1)^{a_1} \cdots (x''_{2f-1})^{a_{2f-1}} ( \cap_{2f-1} \cdots  \cap_3  \cap_1),
\end{equation}
where $\alpha \in \enyang f f$.   These affine tangle diagrams are stratified and flagpole descending, and have no closed loops.  Moreover,  every $\Z$--weighted $(0, 2 f)$--connector has a lifting in this set.  Therefore,  by the analogue of Lemma \ref{lemma:  isotopic flagpole descending diagrams},  every totally descending, flagpole descending affine $(0, 2f)$--tangle diagram without closed loops, is regularly isotopic to one of the  diagrams represented in Equation (\ref{equation: canonical form for 0-2f tangles}).

\end{remark}

\subsection{New bases}  \label{section:  new bases with x-i}

So far, we have produced
 bases $\U'$ of the affine BMW algebras and $\U'_r$ of the cyclotomic BMW algebras involving  (as did our previous bases in ~\cite{GH2}) ordered monomials in the non--commuting  but conjugate elements $x'_j$ and $x''_j$.  
 
 We will now use these bases  to obtain new bases    involving instead monomials in the commuting elements $x_j$.
 
Consider the definition of $T'_D$  in Equations (\ref{equation: definition of Td}) and (\ref{equation: definition of UprimeD}).
Note that
\begin{equation*}
\begin{split}
(\cup_1 &\cup_3 \cdots \cup_{2f-1})  (x'_{2f-1})^{c_{2f-1}} \cdots (x'_3)^{c_3}  (x'_1)^{c_1} \\
&= (\cup_1 \cup_3 \cdots \cup_{2f-1})  (x_{2f-1})^{c_{2f-1}} \cdots (x'_3)^{c_3}  (x'_1)^{c_1}  \\
&= (\cup_1 \cup_3 \cdots \cup_{2f-1})  ((x'_{2f-3})^{c_{2f-3}} \cdots (x'_3)^{c_3}  (x'_1)^{c_1})   (x_{2f-1})^{c_{2f-1}},
\end{split}
\end{equation*}
using Lemma \ref{lemma:  identities in the tangle category} (2),
 and the commutivity of $x_{2f -1}$ with $\abmw {2f-2, S}$.  
Applying this step repeatedly, we end with
\begin{equation*}
\begin{split}
(\cup_1 &\cup_3 \cdots \cup_{2f-1})  (x'_{2f-1})^{c_{2f-1}} \cdots (x'_3)^{c_3}  (x'_1)^{c_1} \\
&= (\cup_1 \cup_3 \cdots \cup_{2f-1})  (x_1)^{c_1} (x_3)^{c_3}   \cdots    (x_{2f-1})^{c_{2f-1}} \\
\end{split}
\end{equation*}
Likewise,  using   Lemma \ref{lemma:  identities in the tangle category} (3),
\begin{equation*}
\begin{split}
 (x''_1)^{a_1} &\cdots (x''_{2f-1})^{a_{2f-1}}( \cap_{2f-1} \cdots  \cap_3  \cap_1) \\
& =  (x_1)^{a_1} \cdots (x_{2f-1})^{a_{2f-1}} ( \cap_{2f-1} \cdots  \cap_3  \cap_1)
\end{split}
\end{equation*}

Thus the  element $T'_D$  has the form
\begin{equation} \label{equation:  special form of UprimeD, 1}
\begin{split}
T'_D = g_\alpha & \   (x_1)^{a_1} \cdots (x_{2f-1})^{a_{2f-1}} ( \cap_{2f-1} \cdots  \cap_3  \cap_1)\ T  \\  
&(\cup_1 \cup_3 \cdots \cup_{2f-1}) \   (x_1)^{c_1} (x_3)^{c_3}   \cdots    (x_{2f-1})^{c_{2f-1}} \ (g_\beta)^*, \\
\end{split}
\end{equation}
where  $T$ is an affine tangle diagram
on $s$ strands with no horizontal strands.

We consider the cyclotomic BMW algebra  $W_n = \bmw{n, S, r}(u_1,  \dots, u_r)$;  the arguments for the affine BMW algebras are similar.  If $T'_D \in \U'_r$, then the exponents $a_i$ and $c_i$  of $x_i$  
in Equation (\ref{equation:  special form of UprimeD, 1}) satisfy
$0 \le a_i , c_i \le r-1$.      

Recall from Section \ref{subsection: preliminaries, affine and cyclotomic Hecke algebras}
that 
the quotient of $W_s$ by the ideal $I_s$ spanned by affine tangle diagrams with rank strictly less than $s$ is isomorphic to the cyclotomic Hecke algebra  
$\hec{s, S, r} = \hec{s, S, r}(q^2; u_1, \dots, u_r)$.  The cyclotomic Hecke algebra $\hec{s, S, r} $  is a free $S$--module with basis the set of 
$\tau_{ \omega}  t_1^{b_1} \cdots t_s^{b_s}$,  with $0 \le b_i \le r-1$ and $ \omega \in \S_s$.  Therefore, the element
$T \in W_s$ is congruent modulo $I_s$ to a linear combination of elements 
$t(\tau_{ \omega}  t_1^{b_1} \cdots t_s^{b_s}) = g_{ \omega}  x_1^{b_1} \cdots x_s^{b_s}$. 

 If we replace $T$ with $g_{ \omega}  x_1^{b_1} \cdots x_s^{b_s}$ in Equation (\ref{equation:  special form of UprimeD, 1}), and then apply
Lemma \ref{lemma:  identities in the tangle category} (1), (4) and (5), we obtain (in place of $T'_D$) an expression
\begin{equation} \label{equation: special form of U_D}
\begin{split}
 g_\alpha  \   x_1^{a_1} &\cdots x_{2f-1}^{a_{2f-1}} (e_1 e_3 \cdots e_{2f-1}  g_\omega) \\
 &  x_1^{c_1} x_3^{c_3}   \cdots    x_{2f-1}^{c_{2f-1}} x_{2f+1}^{b_1} \cdots x_n^{b_s} \ (g_\beta)^*
 \end{split}
\end{equation}

On the other hand,  if we replace $T$ by an element of $I_s$,  then we obtain (in place of $T'_D$)  a linear combination of 
affine tangle diagrams with rank strictly less than $s$.

Given a   $\Z$--Brauer diagram $D$, we define an element $T_D$ of the form displayed in Equation
(\ref{equation: special form of U_D}) whose associated $\Z$--Brauer diagram $c(U_D)$ is equal to $D$, as follows:
Suppose $D$ has $2 n$ vertices and $s$ vertical strands, and let $f = \break (n-s)/2$.
Let $D$ have the   factorization $D = \alpha \, d  \, \beta\inv,$  where
$\alpha, \beta \in \dfn$, and $d_0 = e_1 \cdots e_{2f-1} \pi$,  with $\pi$ a permutation
of $\{2f+1, \dots, n\}$. 

Define
\begin{equation} \label{equation: definition of Td in affine case}
\begin{split}
T_d =  x_1^{a_1} \cdots x_{2f-1}^{a_{2f-1}} &(e_1 e_3 \cdots e_{2f-1}  g_\pi) \\
 &  x_1^{c_1} x_3^{c_3}   \cdots    x_{2f-1}^{c _{2f-1}} x_{2f+1}^{b_{2f+1}} \cdots x_n^{b_n},
\end{split}
\end{equation}
where the exponents are determined as follows:  If $d$ has a horizontal strand beginning at $\p i$ with integer valued label $k$, then $c_i = k$; and $c_i = 0$ otherwise.   If $d$ has a vertical strand beginning at  $\p i$ with integer valued label $k$, then $b_i = k$; and $b_i = 0$ otherwise.
If $d$ has a horizontal strand ending at
$\pbar i$ with integer valued label $k$,  then $a_i = k$; and $a_i = 0$ otherwise.

Finally, set 
%\begin{equation} \label{equation:  definition of U-D in affine case}
$
T_D = g_\alpha \ T_d \ ( g_\beta)^*.
$
%\end{equation}
Then
$c(U_D) = D$.

\begin{theorem} \label{theorem:  the x-i basis of cyclotomic BMW}  Let $S$ be an admissible integral domain.
 Let $\U_r$ be the set of $T_D$  corresponding to $\Z$--Brauer diagrams $D$ with integer valued labels in the interval  $0 \le k \le r-1$.   Then $\U_r$ is an $S$--basis of $\abmw {n, S, r}(u_1, \dots, u_r)$.
\end{theorem}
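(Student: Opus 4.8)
The plan is to derive the result from Proposition~\ref{theorem: U-prime basis of cyclotomic algebra}, which gives that $\U'_r$ is an $S$--basis of $W_n = \bmw{n, S, r}(u_1, \dots, u_r)$, by showing that $\U'_r$ and $\U_r$ are related by a change of basis that is triangular with respect to the rank filtration. Write $J_s$ for the $S$--span of the affine tangle diagrams of rank $\le s$, so that $W_n = J_n \supseteq J_{n-2} \supseteq \cdots$ (the number of vertical strands $s$ of a $\Z$--Brauer diagram on $2n$ vertices satisfies $s \equiv n \bmod 2$). Both $\U'_r$ and $\U_r$ are indexed by the same $\Z$--Brauer diagrams $D$ with labels in $0 \le k \le r-1$, and since $c(T'_D) = c(T_D) = D$, the elements $T'_D$ and $T_D$ have the same number $s = s(D)$ of vertical strands; in particular each lies in $J_s$, so both sets respect the filtration.

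The key step is the congruence already essentially computed before the statement: for each $D$ with $s(D) = s$,
$$
T'_D \equiv \sum_{D'} \mu_{D, D'}\, T_{D'} \pmod{J_{s-2}},
$$
the sum running over $\Z$--Brauer diagrams $D'$ with $s(D') = s$ and labels in $[0, r-1]$. To obtain it, I would first use Lemma~\ref{lemma:  identities in the tangle category}(2),(3) and the commutativity of the $x_j$ to put $T'_D$ in the form of Equation~(\ref{equation:  special form of UprimeD, 1}), whose central factor $T$ is an affine tangle on the $s$ vertical strands with no horizontal strands. Reducing $T$ modulo $I_s$ and applying the section $t$ of $p\colon W_s \to \hec{s, S, r}$ replaces $T$ by $t(p(T)) = \sum_\omega \lambda_\omega\, g_\omega x_1^{b_1} \cdots x_s^{b_s}$; that the cyclotomic Hecke basis has exponents $0 \le b_i \le r-1$ is exactly what keeps the labels of the resulting $D'$ in $[0, r-1]$. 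Substituting and applying Lemma~\ref{lemma:  identities in the tangle category}(1),(4),(5) turns each summand into an element of the canonical form of Equation~(\ref{equation: special form of U_D}), that is, into a $T_{D'}$ with $s(D') = s$. The difference $g_\alpha\, A\,(T - t(p(T)))\, B\,(g_\beta)^*$ lies in $J_{s-2}$, because $T - t(p(T)) \in I_s$ has rank $< s$ and the rank of a composite is at most the minimum of the ranks of its factors.

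Granting that $\U'_r$ respects the filtration, so that $\{T'_D : s(D) \le s\}$ is a basis of $J_s$, the congruence finishes the argument by induction on $s$: the base case is the minimal value of $s$, where $J_{s-2} = 0$ makes the congruence an equality $T'_D = \sum \mu_{D,D'} T_{D'}$; and in the inductive step, $J_{s-2} \subseteq \operatorname{span}(\U_r)$ together with the congruence gives $T'_D \in \operatorname{span}(\U_r)$ for every $D$ with $s(D) = s$, whence $J_s \subseteq \operatorname{span}(\U_r)$. Thus $\U_r$ spans $W_n$. Since $W_n$ is free of rank $r^n(2n-1)!! = |\U_r|$ by Theorem~\ref{theorem: cyclotomic isomorphism}, and a spanning set whose cardinality equals the rank of a free module over an integral domain is automatically a basis, I conclude that $\U_r$ is an $S$--basis. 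Equivalently, one may argue on the associated graded: at each level the congruence shows $\{T_D : s(D)=s\}$ spans the same subspace of $J_s/J_{s-2}$ as the basis $\{T'_D : s(D)=s\}$, and a spanning set of the right cardinality is a basis.

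The conceptual content is the single congruence above; the main obstacle is the bookkeeping that supports it. One must verify that replacing the central factor $T$ by $t(p(T))$ perturbs $T'_D$ only by an element of $J_{s-2}$ (rather than by an arbitrary element of rank $< n$), and that after reinserting the caps, cups, and the braids $g_\alpha$ and $(g_\beta)^*$ the leading terms are genuinely diagrams of the canonical form $T_{D'}$ with the same number $s$ of vertical strands and with labels still confined to $[0, r-1]$. Both points rest only on the tangle--category identities of Lemma~\ref{lemma:  identities in the tangle category} and on the submultiplicativity of rank under composition, so, as the analogous statements in \cite{GH2} indicate, no new idea beyond the explicit rewriting is required.
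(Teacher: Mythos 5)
Your proposal follows essentially the same route as the paper: put $T'_D$ into the form of Equation (\ref{equation:  special form of UprimeD, 1}), replace the central factor $T$ (an affine tangle on the $s$ vertical strands) by $t(p(T))$ expanded in the cyclotomic Hecke basis, conclude that $T'_D$ lies in the span of $\U_r$ modulo the span of diagrams of rank $<s$, and induct on the number of vertical strands. Two remarks. First, the step you explicitly ``grant'' --- that the span of the affine tangle diagrams of rank $<s$ coincides with the span of the $T'_{D'}$ with fewer than $s$ vertical strands --- is precisely the one point the paper does \emph{not} take for granted, and your appeal to submultiplicativity of rank under composition does not deliver it: a priori, an element of rank $<s$ could have nonzero coefficients on basis elements $T'_{D'}$ with $s$ or more vertical strands when expanded in the basis $\U'_r$. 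The paper closes this by observing that the rewriting algorithm of \cite{GH2}, Propositions 2.18--2.19, which expresses an arbitrary affine tangle diagram as a combination of flagpole descending ones by smoothing crossings, never increases the number of vertical strands; hence a diagram with $k$ vertical strands is a combination of $T'_{D'}$ with at most $k$ vertical strands. You need to supply this observation for your induction to close. Second, your treatment of linear independence --- a spanning family of cardinality $r^n(2n-1)!!$ in a free module of that rank over a commutative ring is automatically a basis, invoking Theorem \ref{theorem: cyclotomic isomorphism} --- is a valid and arguably more self-contained alternative to the paper's deferral to the proof of Theorem 5.5 of \cite{GH2}.
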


\begin{proof}
We will show that $\U_r$ is spanning.  Linear independence is proved as in the proof 
of Theorem 5.5
%\ref{I-theorem: basis theorem over admissible integral domain} 
in ~\cite{GH2}.  It suffices to show that $\U'_r$ is contained in the linear span of $\U_r$.    

Let $T'_D \in \U'_r$, where $D$ has exactly $s$ vertical strands.  We show by induction on $s$ that $T'_D$ is in the span of $\U_r$.   If $s = 0$,   then in Equation (\ref{equation:  special form of UprimeD, 1}),   the tangle $T$ is missing, and  $T'_D$ is already an element of $\U_r$.  If $s = 1$,  then in 
 Equation (\ref{equation:  special form of UprimeD, 1}),   the tangle $T$ is equal to a power of  $x_1$,
 and again  $T'_D \in \U_r$,  by Lemma \ref{lemma:  identities in the tangle category} (5).
 
Assume that $s > 1$ and that all that all elements of $\U'_r$ with fewer than $s$ vertical strands are in the span of $\U_r$.    It follows from Equation (\ref{equation:  special form of UprimeD, 1}), and the discussion following it,  that $T'_D$ is in the span of $\U_r$,   modulo the ideal 
$I_n^{(s-1)}$ in $\abmw {n, S, r}(u_1, \dots, u_r)$ spanned by
of affine tangle diagrams with rank strictly less than $s$.

Now it only remains to check that $I_n^{(s-1)}$ is spanned by elements of $\U'_r$ with fewer than $s$ vertical strands.  Here one only has to observe that smoothing any crossing in a tangle diagram with 
$k$ vertical strands produces a tangle diagram with at most $k$ vertical strands.  Therefore, the
algorithm from ~\cite{GH2},  Propositions 2.18 and 2.19,
% \ref{I-proposition:  layered diagrams span} and \ref{I-proposition:  generalization of descending diagrams span},  
 for writing an affine tangle diagram (with $k$ vertical strands)  as a linear combination of flagpole descending affine tangle diagrams    produces only affine tangle diagrams with at most $k$ vertical strands.
\end{proof}

For the affine case, we have the following result, with essentially the same proof: 

\begin{theorem} 
Let $S$ be any ring with appropriate parameters.  $\U =  \{ T_D : D \text{ is a }  \Z  \text {--Brauer diagram}\}$ is a basis of   $\abmw {n, S}$.
\end{theorem}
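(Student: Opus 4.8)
The plan is to imitate the proof of Theorem~\ref{theorem:  the x-i basis of cyclotomic BMW} essentially line for line, replacing the cyclotomic Hecke algebra by the affine Hecke algebra $\ahec{s,S}(q^2)$ and dropping the restriction $0\le b_i\le r-1$ on the exponents of the $x_j$. As in that proof I would establish spanning and linear independence separately, with spanning being the part that requires work. Since $\U'$ is already known to be a basis of $\abmw{n,S}$ by Proposition~\ref{theorem: U-prime basis of affine algebra}, it suffices for spanning to show that every $T'_D\in\U'$ lies in the $S$-span of $\U$.

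First I would induct on the number $s$ of vertical strands of the underlying $\Z$-Brauer diagram $D$. For $s\le 1$ the element $T'_D$ is already a member of $\U$ (for $s=1$ after absorbing the single power of $x_1$ via Lemma~\ref{lemma:  identities in the tangle category}(5)). For the inductive step I would start from the normal form~(\ref{equation:  special form of UprimeD, 1}), in which the central factor $T$ is an affine tangle on $s$ strands with no horizontal strands, hence of full rank $s$. Using that $\abmw{s,S}/I_s\cong\ahec{s,S}(q^2)$, I would write $T$, modulo $I_s$, as an $S$-linear combination of the sections $g_\omega x_1^{b_1}\cdots x_s^{b_s}=t(\tau_\omega t_1^{b_1}\cdots t_s^{b_s})$ of the affine Hecke basis elements, where $\omega\in\S_s$ and now $b\in\Z^s$ is an \emph{unrestricted} integer multi-exponent. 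Substituting this into~(\ref{equation:  special form of UprimeD, 1}) and moving the $\cap_i$ and $\cup_i$ past the braid and the $x$-powers by Lemma~\ref{lemma:  identities in the tangle category}(1),(4),(5), each resulting term takes the form~(\ref{equation: special form of U_D}), i.e. becomes a member of $\U$ indexed by a $\Z$-Brauer diagram with $s$ vertical strands, while the contribution coming from the part of $T$ lying in $I_s$ falls into the ideal $I_n^{(s-1)}$ of diagrams of rank $<s$.

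To close the induction I would invoke the observation (the straightening algorithm of \cite{GH2}, Propositions 2.18 and 2.19) that smoothing a crossing never increases the number of vertical strands; consequently every element of $I_n^{(s-1)}$ is a combination of flagpole-descending, stratified diagrams, i.e. of elements of $\U'$, each with fewer than $s$ vertical strands. By the inductive hypothesis these lie in the span of $\U$, hence so does $T'_D$. Since $\U'$ spans $\abmw{n,S}$, it follows that $\U$ spans.

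For linear independence I would argue exactly as for $\U'$ in Proposition~\ref{theorem: U-prime basis of affine algebra}, namely as in the proof of Theorem~2.25 of \cite{GH2}. Conceptually, the rewriting above is triangular with respect to the rank filtration $I_n^{(0)}\subseteq I_n^{(1)}\subseteq\cdots$: on each quotient $I_n^{(s)}/I_n^{(s-1)}$ the passage from $\U'$ to $\U$ is governed by the invertible change between ordered monomials in the $x''_j$ and monomials in the $x_j$ inside $\ahec{s,S}(q^2)$, so that the transition from the basis $\U'$ to $\U$ is invertible and $\U$ is again a basis. The main obstacle is organizational rather than computational: one must check that each reduction step genuinely stays inside $I_n^{(s-1)}$, so the induction is well-founded, and — because $\abmw{n,S}$ is a free $S$-module of \emph{infinite} rank — one cannot shortcut linear independence by a dimension count, so either the structural argument of \cite{GH2} or the triangularity just described is genuinely needed.
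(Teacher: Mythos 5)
Your proposal is correct and matches the paper's intent exactly: the paper proves this theorem simply by remarking that it has ``essentially the same proof'' as Theorem \ref{theorem:  the x-i basis of cyclotomic BMW}, and your write-up is a faithful expansion of that remark, substituting the affine Hecke algebra for the cyclotomic one and lifting the exponent restrictions. The additional care you take about the rank filtration and the inapplicability of a dimension count is sensible but does not change the route.
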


Let $D$ be an  $\Z$--Brauer diagram, with $2n$ vertices and  $s$ vertical strands,  having factorization $D = \alpha d \beta$, and let 
$T_d$   be defined as in Equation
(\ref{equation: definition of Td in affine case}) and let  $T_D = g_\alpha\, T_d \,(g_\beta)^*$.   We can rewrite $T_D$ as follows.   Factor $\alpha$ as
$\alpha = \alpha_1 \alpha_2$, with $\alpha_1$ a $(2 f, s)$--shuffle, and $\alpha_2 \in \enyang  f f$, and factor $\beta$ similarly.   Then
\begin{equation} \label{equation:  rewriting T-D 1}
T_D =g_{\alpha_1}   \left [  g_{\alpha_2}  \xbold^a (e_1 e_3 \cdots e_{2f-1})  \xbold^c  (g_{\beta_2})^* \right ]   (g_\pi  \xbold^b)   (g_{\beta_1})^*,
\end{equation}
where $\xbold^a$ is short for $x_1^{a_1} \cdots x_{2f-1}^{a_{2f-1}}  $, and similarly for
$\xbold^c$,  while $\xbold^b$  denotes \break $x_{2f +1}^{b_{2f + 1}} \cdots x_n^{b_n}$.

\newpage
\section{Cellular bases of  cyclotomic BMW algebras}
\label{section:  cellular bases}

\subsection{Tensor products of affine tangle diagrams} 
\label{subsection: tensor products of affine tangle diagrams}
 The category of affine $(k, \ell)$--tangle diagrams is not a tensor category in any evident fashion.  Nevertheless, we can define a tensor product of affine tangle diagrams, as follows.  Let $T_1$ and $T_2$ be 
affine tangle diagrams  (say of size $(a, a)$  and $(b, b)$, respectively), and suppose that  $T_2$ has no closed loops.  Then
$T_1 \odot T_2$ is obtained by replacing the flagpole in the affine tangle diagram $T_2$ with the entire affine tangle diagram $T_1$.  See Figure \ref{figure: odot tensor product of affine tangle diagrams}.   If we regard $T_1$ and $T_2$ as representing framed
tangles in the annulus cross the interval $A \times I$,  then $T_1 \odot T_2$ is obtained by inserting the entire  copy of $A \times I$ containing $T_1$  into the hole of the copy of 
$A \times I$ containing $T_2$.

\begin{figure}[t] \label{figure: odot tensor product of affine tangle diagrams}
$$
\inlinegraphic[scale= 1.5]{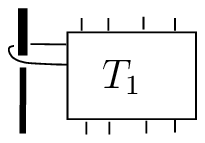},  \quad  \inlinegraphic[scale=1.5]{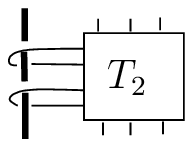} \mapsto$$
$$\inlinegraphic[scale= 1.5]{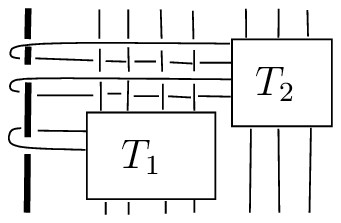} $$
\caption{$T_1 \odot T_2$}
\end{figure}

Then  $T_1 \otimes T_2 \mapsto T_1 \odot T_2$  determines a linear map from
$\abmw {a, S} \otimes \abmw {b, S}$ into $\abmw {a + b, S}$, or from
$\bmw{a, S,r}  \otimes \bmw{b, S, r}$ into $\bmw{a + b, S, r}$.
Note that $(T_1 \odot T_2)^* = T_1^* \odot T_2^*$.

These maps of affine and cyclotomic BMW algebras are not algebra homomorphisms.  In fact,
$$(1 \odot e_1)(1\odot x_1)(1\odot e_1) = z \odot e_1,$$  where $z$ is a (non--scalar) central element in $\abmw {a, S}$.    Nevertheless, we have
$$
(A \odot B)(S \odot T) = A S \odot B T,
$$
if no closed loops are produced in the product $BT$,  in particular, if  at least one of $B$ and $T$ has no horizontal strands.

\subsection{Cellular bases}

Using Equation (\ref{equation:  rewriting T-D 1}) and our remarks in Section
\ref{subsection: tensor products of affine tangle diagrams},  we can rewrite the elements $T_D$ of Section \ref{section:  new bases with x-i}  in the form
\begin{equation} \label{equation:  rewriting T-D 2}
T_D =g_{\alpha_1}   (\left [  g_{\alpha_2}  \xbold^a (e_1 e_3 \cdots e_{2f-1})  \xbold^c  (g_{\beta_2})^* \right ] \odot  g_\pi  \xbold^b  ) (g_{\beta_1})^*\\
\end{equation}
Here,  $D$ is a $\Z$--Brauer diagram with $s$ vertical strands and  $f = (n-s)/2$;  $\alpha_1$ and $\beta_1$  are  $(n-s, s)$--shuffles; $\pi \in \S_s$  and $\xbold^b = 
x_1^{b_1} \cdots x_s^{b_s}$.
 Moreover,  $\alpha_2$ and $\beta_2$ are  elements of $\enyang f f$,  
$\xbold^a = x_1^{a_1} x_3^{a_3} \cdots x_{2f -1}^{a_{2f -1}}$, and similarly for
$\xbold^c$.      

The affine $(2 f, 2f)$--tangle diagram
$$
T = g_{\alpha_2}  \xbold^a (e_1 e_3 \cdots e_{2f-1})  \xbold^c  (g_{\beta_2})^*
$$
is stratified and  flagpole descending, with no vertical strands and no closed loops.  Conversely,  any  stratified and  flagpole descending
affine $(2 f, 2f)$--tangle diagram with no vertical strands and no closed loops is regularly isotopic to one of this form.

Note that we can factor $T$ as $T = x y^*$,  where $x$ and $y$ are 
stratified and  flagpole descending
affine $(0, 2f)$--tangle diagram with no closed loops, namely  
$$x = g_{\alpha_2}  \xbold^a( \cap_{2f-1} \cdots  \cap_3  \cap_1)  \text{ and }
y = g_{\beta_2}  \xbold^c( \cap_{2f-1} \cdots  \cap_3  \cap_1).$$
By Remark \ref{remark:  stratified flagpole descending k-ell diagrams},  any stratified and  flagpole descending
affine $(0, 2f)$--tangle diagram with no closed loops is regularly isotopic to one of this form.

\begin{lemma}  \label{lemma: rewriting T-D 3}
The set of $T_D \in \U$  with  $s$ vertical strands equals the set of elements
$$
g_\alpha (x y^* \odot g_\pi \xbold^b) )   (g_\beta)^*,
$$
where  $x, y$  are stratified, flagpole descending affine $(0, n-s)$--tangle diagrams without closed loops or self-crossings of strands;  $\alpha$ and $\beta$ are
$(n-s, s)$--shuffles;  $\pi \in \S_s$, and $\xbold^b =   x_1^{b_1} \cdots x_s^{b_s}$.

Moreover,  $T_D \in \U_r$ if, and only if,   the exponents $b_i$ are in the range
$0 \le b_i \le r-1$,  and the winding numbers of $x$ and $y$ with the flagpole are in the same range.
\end{lemma}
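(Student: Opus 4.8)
The statement is essentially a repackaging of the computation culminating in Equation~\ref{equation:  rewriting T-D 2}, and the plan is to extract both inclusions from that equation together with the factorization $T = xy^*$ recorded just above the lemma, the only substantive inputs being the converse clause of Remark~\ref{remark:  stratified flagpole descending k-ell diagrams} and the length-additive factorization of $\dfn$. For the inclusion $\subseteq$, let $T_D \in \U$ have exactly $s$ vertical strands and put $f = (n-s)/2$. Equation~\ref{equation:  rewriting T-D 2} gives $T_D = g_{\alpha_1}(T\odot g_\pi\xbold^b)(g_{\beta_1})^*$, where $\alpha_1, \beta_1$ are $(n-s, s)$--shuffles, $\pi\in\S_s$, $\xbold^b = x_1^{b_1}\cdots x_s^{b_s}$, and $T = g_{\alpha_2}\xbold^a(e_1 e_3\cdots e_{2f-1})\xbold^c(g_{\beta_2})^*$ with $\alpha_2, \beta_2 \in \enyang f f$. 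As recorded immediately before the lemma, $T$ factors as $T = xy^*$ with $x = g_{\alpha_2}\xbold^a(\cap_{2f-1}\cdots\cap_1)$ and $y = g_{\beta_2}\xbold^c(\cap_{2f-1}\cdots\cap_1)$ stratified, flagpole descending affine $(0, n-s)$--tangle diagrams without closed loops or self-crossings (here one uses Lemma~\ref{lemma:  identities in the tangle category}(1), the relation $(\cap_i)^* = \cup_i$, and the $*$--invariance and commutativity of the $x_j$). Setting $\alpha = \alpha_1$ and $\beta = \beta_1$ then exhibits $T_D$ in the displayed form.

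For the reverse inclusion I would start from an arbitrary element $g_\alpha(xy^*\odot g_\pi\xbold^b)(g_\beta)^*$ of the displayed form. By the converse clause of Remark~\ref{remark:  stratified flagpole descending k-ell diagrams}, each of $x$ and $y$ is regularly isotopic---hence equal as an algebra element---to a diagram $g_{\alpha_2}\xbold^a(\cap_{2f-1}\cdots\cap_1)$, respectively $g_{\beta_2}\xbold^c(\cap_{2f-1}\cdots\cap_1)$, with $\alpha_2, \beta_2 \in \enyang f f$ (converting the $x''_j$ of Equation~\ref{equation: canonical form for 0-2f tangles} to the $x_j$ by Lemma~\ref{lemma:  identities in the tangle category}(3)). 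Since a $(n-s, s)$--shuffle times an element of $\enyang f f$ lies in $\dfn$ with additive length---so that $g_{\alpha\alpha_2} = g_\alpha g_{\alpha_2}$ and $g_{\beta\beta_2} = g_\beta g_{\beta_2}$---the data $(\alpha\alpha_2, \beta\beta_2, \pi)$ and the exponents $a, c, b$ assemble, through the factorization of Equation~\ref{equation: factorization of Brauer diagrams}, into a $\Z$--Brauer diagram $D = (\alpha\alpha_2)\,d\,(\beta\beta_2)\inv$ whose lift, by the definition of $T_D$ via Equation~\ref{equation:  rewriting T-D 2}, equals the given element. This yields $\supseteq$ and hence the set equality.

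Finally, the $\U_r$ clause is bookkeeping on labels. By construction of the factorization, the integer labels of $D$ are exactly the exponents $b_i$ on the vertical strands of $d$ together with the exponents $a_i, c_i$ on its horizontal strands; and since the positive permutation braids $g_{\alpha_2}, g_{\beta_2}, g_\pi$ and the caps $\cap_i$ contribute no winding about the flagpole, the $a_i$ (resp.\ $c_i$) are precisely the winding numbers of the strands of $x$ (resp.\ $y$). As $T_D \in \U_r$ means by definition that all labels of $D$ lie in $0 \le k \le r-1$, this translates into $0 \le b_i \le r-1$ together with the winding numbers of $x$ and $y$ lying in the same range. The only point that I expect to require genuine care is the reverse inclusion: one must verify that replacing $x$ and $y$ by their canonical representatives and reassembling really reproduces a \emph{bona fide} $T_D$. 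This depends on the length-additivity of the shuffle-times-$\enyang f f$ factorization (which furnishes $g_{\alpha\alpha_2} = g_\alpha g_{\alpha_2}$) and on $\odot$ and algebra multiplication being well defined on regular-isotopy classes; both are already in hand, so the argument is ultimately routine.
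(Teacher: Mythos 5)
Your proposal is correct and follows exactly the route the paper intends: the lemma is stated without a separate proof because it is meant to summarize the preceding discussion (Equation (\ref{equation:  rewriting T-D 2}), the factorization $T = xy^*$, and the converse clause of Remark \ref{remark:  stratified flagpole descending k-ell diagrams}), and your argument assembles precisely these ingredients, with the reverse inclusion resting, as it should, on the length-additive factorization of $\dfn$ into a shuffle times an element of $\enyang f f$. The label bookkeeping for the $\U_r$ clause is also as the paper intends.
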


We will show that the cyclotomic BMW algebras  defined over integral, admissible rings are cellular.
We fix an integral domain $S$ with admissible parameters, and write $\bmw {n, S, r}$  for
 $\bmw {n, S, r}(u_1, \dots, u_r)$ and $\hec {n, S, r}$ for 
  $\hec {n, S, r}(q^2; u_1, \dots, u_r)$.
  
  For each $s$ with $s \le n$ and $n-s$ even,  let $V_n^s$  be the span in 
  $\bmw {n, S, r}$ of the set of  elements $T_D \in \U_r$ with $s$ vertical strands.

\begin{lemma}  \label{lemma:  basis of cyclotomic Hecke to basis of cyclotomic BMW}
For each $s$, let $\mathbb B_s$ be a basis of $\hec {s, S, r}$.
Let $\Sigma_s$ be the set of elements
$$
g_\alpha (x y^* \odot t(b))    (g_\beta)^* \in \bmw {n, S, r},
$$
such that  $x, y$  are stratified, flagpole descending affine $(0, n-s)$--tangle diagrams without closed loops;  $\alpha$ and $\beta$ are
$(n-s, s)$--shuffles; and $b \in \mathbb B_s$.  Then $\Sigma_s$  is a basis of $V_n^s$.
\end{lemma}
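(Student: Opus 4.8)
The plan is to realize $\Sigma_s$ as the image of the \emph{standard} monomial basis of $\hec{s,S,r}$ under a family of linear isomorphisms, one for each choice of auxiliary data, so that the whole statement collapses to a block--diagonal change of basis. First I would record, using Lemma~\ref{lemma: rewriting T-D 3} together with the fact that $\U_r$ is an $S$--basis of $\bmw{n,S,r}$ (Theorem~\ref{theorem:  the x-i basis of cyclotomic BMW}), that the set
$$
\{\, g_\alpha (x y^* \odot g_\pi \xbold^b)(g_\beta)^* \,\}
$$
is an $S$--basis of $V_n^s$, where $(x,y,\alpha,\beta)$ ranges over the admissible data (stratified, flagpole descending $(0,n-s)$--tangle diagrams $x,y$ without closed loops and with winding numbers in $[0,r-1]$, and $(n-s,s)$--shuffles $\alpha,\beta$) and $\tau_\pi \boldt^b$ ranges over the standard monomial basis $\{\tau_\pi t_1^{b_1}\cdots t_s^{b_s} : \pi \in \S_s,\ 0 \le b_i \le r-1\}$ of $\hec{s,S,r}$. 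The key observation is that $g_\pi \xbold^b = t(\tau_\pi \boldt^b)$, so these basis vectors are exactly $g_\alpha(xy^*\odot t(\tau_\pi\boldt^b))(g_\beta)^*$.

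Next, for each fixed datum $w=(x,y,\alpha,\beta)$ I would introduce the map
$$
\Phi_w : \hec{s,S,r} \longrightarrow \bmw{n,S,r}, \qquad \Phi_w(c) = g_\alpha\,(x y^* \odot t(c))\,(g_\beta)^*.
$$
This map is $S$--linear: $t$ is linear, $\odot$ is linear in its second argument (legitimately, since $t(c)$ is an $S$--combination of loop--free diagrams, which is where $\odot$ is defined), and left multiplication by $g_\alpha$ and right multiplication by $(g_\beta)^*$ are linear. Let $V_n^{s,w}$ denote its image. By the previous paragraph $\Phi_w$ sends the standard basis of $\hec{s,S,r}$ to distinct elements of $\U_r$; being part of a basis these are $S$--linearly independent, and they span $V_n^{s,w}$ by definition, so $\Phi_w$ carries a basis to a basis of $V_n^{s,w}$ and is therefore an $S$--module isomorphism onto $V_n^{s,w}$.

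Finally, since the union over all $w$ of these standard--basis images is precisely the basis of $V_n^s$ produced in the first step, I obtain a direct sum decomposition $V_n^s = \bigoplus_w V_n^{s,w}$. Now I invoke the hypothesis that $\mathbb B_s$ is a basis of $\hec{s,S,r}$: applying the isomorphism $\Phi_w$ carries $\mathbb B_s$ to a basis $\Phi_w(\mathbb B_s) = \{\, g_\alpha(xy^*\odot t(b))(g_\beta)^* : b \in \mathbb B_s \,\}$ of $V_n^{s,w}$. Taking the union over the finitely many $w$ and using the direct sum, $\Sigma_s = \bigsqcup_w \Phi_w(\mathbb B_s)$ is a basis of $V_n^s$, which is the assertion.

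The argument is essentially formal once this scaffolding is in place; the single genuine point is the isomorphism property of each $\Phi_w$, and this rests entirely on $\U_r$ being a basis (so that the standard--basis images are linearly independent) rather than on any multiplicativity of $t$ or $\odot$, neither of which holds nor is needed here. The mild technical care is only to observe that $\odot$ is applied with its loop--free argument in the second slot, and that the parametrization of the $s$--vertical--strand elements of $\U_r$ by the data $(x,y,\alpha,\beta,\pi,b)$ is a bijection, both of which are already furnished by Lemma~\ref{lemma: rewriting T-D 3}.
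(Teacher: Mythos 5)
Your proposal is correct and follows essentially the same route as the paper: both decompose $V_n^s$ as a direct sum over the data $(\alpha,\beta,x,y)$, observe that $u\mapsto g_\alpha(xy^*\odot t(u))(g_\beta)^*$ is an injective linear map carrying the standard monomial basis of $\hec{s,S,r}$ into $\U_r$, and then transport the basis $\mathbb B_s$ along each such isomorphism. Your write-up merely makes explicit the linearity and injectivity checks that the paper leaves implicit.
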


\begin{proof}  Recall that $\{\tau_\pi \tbold^b : \pi \in \S_s \text{ and }   0 \le b_i \le r-1\}$
is a basis of $\hec {s, S, r}$,  and that $g_\pi \xbold^b = t(\tau_\pi \tbold^b)$.
 It follows from this and from Lemma \ref{lemma: rewriting T-D 3} that
$V_n^s$  is the direct sum over $(\alpha, \beta, x, y)$ of
$$
V_n^s(\alpha, \beta, x, y) =  \{ g_\alpha ( x y^* \odot t(u))  (g_\beta)^* : u \in \hec {s, S,r}\}
$$
 and that
$u \mapsto  g_\alpha ( T \odot t(u))  (g_\beta)^*$ is injective.  This implies the result.
\end{proof}

For each  $s$ ($s \le n$ and $n-s$ even),  let $(\mathcal C_s, \Lambda_s)$ be a cellular basis of the cyclotomic Hecke algebra $\hec {s, S,r}$.   Let $\Lambda = \{(s, \la) :  \la \in \Lambda_s\}$  with partial order
$(s, \la)  \ge (t, \mu)$  if $s < t$  or if $s = t$ and $\la \ge \mu$ in $\Lambda_s$.  For each pair
$(s, \la) \in \Lambda$,  we take $\mathcal T(s, \la)$ to be the set of triples
$(\alpha, x, u)$,  where $\alpha$ is a $(n-s, s)$--shuffle;   $x$ is a stratified, flagpole descending affine $(0, n-s)$--tangle without closed loops;  and $u \in \mathcal T(\lambda)$.  Define
$$
c_{(\alpha, x, u), (\beta, y, v)}^{(s, \la)} =  g_\alpha ( x y^* \odot t(c_{u, v}^\la))  (g_\beta)^*,
$$
and $\mathcal C$ to be the set of all 
$
c_{(\alpha, x, u), (\beta, y, v)}^{(s, \la)}
$.

\begin{lemma} \label{lemma: * property of cellular basis}
 $(c_{(\alpha, x, u), (\beta, y, v)}^{(s, \la)})^* \equiv c_{(\beta, y, v), (\alpha, x, u)}^{(s, \la)} \mod  \breve W_{n, S,r}^{(s, \la)}$.
\end{lemma}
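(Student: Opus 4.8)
The plan is to compute $(c_{(\alpha, x, u), (\beta, y, v)}^{(s, \la)})^*$ directly from its definition and simplify using the interaction between the involution $*$ and the tensor product $\odot$, together with the cellularity of the Hecke basis. Recall that
$$
c_{(\alpha, x, u), (\beta, y, v)}^{(s, \la)} =  g_\alpha ( x y^* \odot t(c_{u, v}^\la))  (g_\beta)^*.
$$
Applying $*$ reverses the order of factors and sends each factor to its image under $*$. Since $*$ is an anti-automorphism fixing the $g_i$, we have $(g_\alpha)^* = (g_\alpha)^*$ and $((g_\beta)^*)^* = g_\beta$. The key structural fact, already recorded in Section \ref{subsection: tensor products of affine tangle diagrams}, is that $(T_1 \odot T_2)^* = T_1^* \odot T_2^*$. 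Thus I would first obtain
$$
(c_{(\alpha, x, u), (\beta, y, v)}^{(s, \la)})^* =  g_\beta \left( (x y^*)^* \odot t(c_{u, v}^\la)^* \right) (g_\alpha)^*.
$$
Now $(x y^*)^* = (y^*)^* x^* = y\, x^*$, which is exactly the first tensor factor appearing in $c_{(\beta, y, v), (\alpha, x, u)}^{(s, \la)}$, so that factor matches on the nose.

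The second tensor factor requires the compatibility of the section $t$ with the involutions. From Section \ref{subsection: preliminaries, affine and cyclotomic Hecke algebras} we have $t(w^*) \equiv t(w)^* \bmod I_n$ for $w$ in the cyclotomic Hecke algebra, and the quotient map $p$ respects $*$. Applying this with $w = c_{u,v}^\la$ and invoking axiom (3) of cellularity for the Hecke basis $\mathcal C_s$, namely $(c_{u,v}^\la)^* \equiv c_{v,u}^\la \bmod \breve H^\la$ where $\breve H^\la$ is spanned by higher Hecke cell basis elements, I would conclude that $t(c_{u,v}^\la)^* \equiv t(c_{v,u}^\la)$ modulo the span of $t$ applied to higher-$\la$ Hecke basis elements together with the ideal $I_s$ of lower rank. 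The step I expect to be the main obstacle is bookkeeping the error terms: I must verify that both the $I_s$ error from $t(w^*) \equiv t(w)^*$ and the higher-cell error from Hecke cellularity land inside $\breve W_{n,S,r}^{(s,\la)}$ after being wrapped by $g_\beta(\,-\,\odot\,-\,)(g_\alpha)^*$.

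To control these errors I would use the definition of the order on $\Lambda$: an element with fewer vertical strands (equivalently strictly smaller rank, giving a strictly larger index in the order on $\Lambda$ since $(s,\la) \ge (t,\mu)$ when $s < t$) is higher in the cell order, and an element coming from a higher Hecke cell $\mu > \la$ in $\Lambda_s$ is higher at the same $s$. Thus the $I_s$ contributions, having fewer than $s$ vertical strands after tensoring and multiplying (here I would invoke that smoothing or multiplication does not increase the number of vertical strands, as in the proof of Theorem \ref{theorem:  the x-i basis of cyclotomic BMW}), lie in $\breve W_{n,S,r}^{(s,\la)}$; and the higher-Hecke-cell contributions, which produce basis elements $c_{\cdot,\cdot}^{(s,\mu)}$ with $\mu > \la$, also lie in $\breve W_{n,S,r}^{(s,\la)}$ by Lemma \ref{lemma:  basis of cyclotomic Hecke to basis of cyclotomic BMW} identifying $\Sigma_s$ with a basis of $V_n^s$.

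Assembling these, the main term is
$$
g_\beta \left( y\, x^* \odot t(c_{v, u}^\la) \right) (g_\alpha)^* = c_{(\beta, y, v), (\alpha, x, u)}^{(s, \la)},
$$
and all error terms lie in $\breve W_{n,S,r}^{(s,\la)}$, which is precisely the asserted congruence. I would emphasize that the weaker axiom (3) of our modified cellularity definition (see Remark \ref{remark:  on definition of cellularity}) is exactly what makes this argument go through cleanly: because we only require $(c_{s,t}^\la)^* \equiv c_{t,s}^\la$ modulo higher terms rather than on the nose, we are free to discard the $I_s$ error coming from $t(w^*) \equiv t(w)^* \bmod I_n$ without needing to construct a $*$-invariant complement of that ideal — this is the point at issue in comparison with Yu's proof mentioned in the same remark.
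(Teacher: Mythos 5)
Your proof is correct and follows essentially the same route as the paper's: apply $*$, use $(T_1\odot T_2)^* = T_1^*\odot T_2^*$ to get $g_\beta\,(y\,x^*\odot t(c_{u,v}^\la)^*)\,(g_\alpha)^*$, and then use $t(w)^*\equiv t(w^*)$ modulo low-rank diagrams together with Hecke cellularity. You are in fact somewhat more careful than the paper's three-line argument, since you separately track the $I_s$ error (landing in cells with fewer vertical strands, hence higher in $\Lambda$) and the possible $\breve H^\la_s$ error from the weak form of axiom (3) for $\mathcal C_s$ (landing in cells $(s,\mu)$ with $\mu>\la$), both of which indeed lie in $\breve W_{n,S,r}^{(s,\la)}$.
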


\begin{proof}  $(g_\alpha ( x y^* \odot t(c_{u, v}^\la))  (g_\beta)^*)^* = g_\beta(y^* x \odot ( t(c_{u, v}^\la)^*) (g_\alpha)^*$, and $( t(c_{u, v}^\la)^* \equiv t(c_{v, u}^\la)$  modulo the span of diagrams of rank $< s$.  Hence $(c_{(\alpha, x, u), (\beta, y, v)}^{(s, \la)})^* \equiv c_{(\beta, y, v), (\alpha, x, u)}^{(s, \la)} $  modulo the span of diagrams of rank $<s$.
\end{proof}

\begin{lemma}  \label{lemma:  cellular cyclotomic BMW}
For any  affine $(n-s, n-s)$--tangle diagram $A$ and affine  $(s, s)$--tangle diagram $B$,
$(A \odot B)( x y^* \odot t(c_{u, v}^\la)) $
 can be written as a linear combination of elements
$
 ( x' y^* \odot t(c_{u', v}^\la)),
$
modulo $\breve W_{n, S,r}^{(s, \la)}$, with coefficients independent of $y$ and $v$.   
\end{lemma}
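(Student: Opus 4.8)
The plan is to transport the cellularity of the cyclotomic Hecke algebra $\hec{s,S,r}$ into $\bmw{n,S,r}$ along the tensor product $\odot$ and the section $t$ of $p$. The key structural point is that $t(c_{u,v}^\la)$ is a linear combination of elements $g_\pi\,\xbold^b$, none of which has a horizontal strand; consequently $B\,t(c_{u,v}^\la)$ produces no closed loops, so the product rule of Section~\ref{subsection: tensor products of affine tangle diagrams} applies term by term and yields
\[
(A\odot B)\bigl(x y^* \odot t(c_{u,v}^\la)\bigr) = (A\,x y^*)\odot \bigl(B\,t(c_{u,v}^\la)\bigr).
\]
I would then analyse the two factors of this $\odot$ product separately.

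For the inner factor, I write $A\,x y^* = (A\,x)\,y^*$; here $A\,x$ is again an affine $(0,n-s)$--tangle, any closed loops reducing to scalars via the free loop relations. Using the spanning statement for $(0,n-s)$--tangles recorded in Remark~\ref{remark:  stratified flagpole descending k-ell diagrams}, I expand $A\,x = \sum_{x'}\sigma_{x'}\,x'$ over stratified, flagpole descending $(0,n-s)$--tangles $x'$ without closed loops, with scalars $\sigma_{x'}\in S$ depending only on $A$ and $x$. Hence $A\,x y^* = \sum_{x'}\sigma_{x'}\,x' y^*$, the coefficients being manifestly independent of $y$ and $v$.

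For the outer factor, I use $B \equiv t(p(B)) \mod I_s$ (since $p(B - t(p(B))) = 0$) together with $t(\xi)t(\eta) \equiv t(\xi\eta) \mod I_s$, to obtain $B\,t(c_{u,v}^\la) \equiv t\bigl(p(B)\,c_{u,v}^\la\bigr) \mod I_s$. Cellularity of $\hec{s,S,r}$ now provides coefficients $r_{u'}^u \in S$, independent of $v$, with $p(B)\,c_{u,v}^\la \equiv \sum_{u'} r_{u'}^u\,c_{u',v}^\la$ modulo the span $\breve H_{s,S,r}^{\la}$ of the $c^\mu$ with $\mu > \la$; lifting through $t$ gives
\[
B\,t(c_{u,v}^\la) \equiv \sum_{u'} r_{u'}^u\,t(c_{u',v}^\la) \mod \bigl(I_s + t(\breve H_{s,S,r}^{\la})\bigr),
\]
with coefficients independent of $y$ and $v$.

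Finally I recombine by bilinearity of $\odot$. The main terms are exactly $\sum_{x',u'}\sigma_{x'}\,r_{u'}^u\,\bigl(x' y^* \odot t(c_{u',v}^\la)\bigr)$, with coefficients independent of $y$ and $v$, as required. The hard part is to check that the two error contributions fall into $\breve W_{n,S,r}^{(s,\la)}$. For the $I_s$--error, since forming $\odot$ with $x' y^*$ adds no vertical strands, $x' y^* \odot(\text{$I_s$--part})$ is a combination of diagrams with fewer than $s$ vertical strands; as smoothing crossings never increases the number of vertical strands, every such diagram reduces into $\bigoplus_{t<s} V_n^t$, which by Lemma~\ref{lemma:  basis of cyclotomic Hecke to basis of cyclotomic BMW} lies in $\breve W_{n,S,r}^{(s,\la)}$. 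For the $t(\breve H_{s,S,r}^{\la})$--error, each summand $x' y^* \odot t(c_{u'',v''}^\mu)$ with $\mu > \la$ is (taking identity shuffles) a cellular basis element with label $(s,\mu) > (s,\la)$, hence also lies in $\breve W_{n,S,r}^{(s,\la)}$. Assembling these establishes the congruence.
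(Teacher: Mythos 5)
Your argument is correct and follows essentially the same route as the paper's proof: distribute $(A\odot B)$ across the $\odot$ product (valid because $t(c_{u,v}^\la)$ has only vertical strands), reduce $A\,x$ to a combination of stratified flagpole descending $(0,n-s)$--tangles without touching $y^*$, reduce $B$ modulo $I_s$ to Hecke--type elements and invoke cellularity of $\hec{s,S,r}$ through the section $t$. The only difference is expository: you phrase the reduction of $B$ as $B\equiv t(p(B))\bmod I_s$ rather than the paper's case split on the rank of $B$, and you spell out explicitly why both error terms land in $\breve W_{n,S,r}^{(s,\la)}$, which the paper leaves implicit.
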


\begin{proof} 
We have
$(A \odot B)( x y^* \odot t(\tau_\pi \xbold^b)) = (A\, x y^* \odot B\,  t(\tau_\pi \xbold^b)),$
because $ t(\tau_\pi \xbold^b)$ has only vertical strands.  Therefore, also
$(A \odot B)( x y^* \odot t(c_{u, v}^\la)) = (A\, x y^* \odot B\,  t(c_{u, v}^\la)).$

 Note that $A \,x$ is an affine $(0, n-s)$--tangle, and can be reduced using the 
 algorithm of the proof of Propositions  2.18 and 2.19
 %\ref{I-proposition:  layered diagrams span} and \ref{I-proposition:  generalization of descending diagrams span} 
 in ~\cite{GH2} to 
 a linear combination of stratified, flagpole descending $(0, n-s)$--tangles  $x'$ without closed loops.  The process does not affect $y^*$.

If $B$ has rank strictly less than $s$,  then the product 
$(A \odot B)( x y^* \odot t(c_{u, v}^\la))$ is a linear combination of basis elements $T_D$ with fewer than $s$ vertical strands, so belongs to
 $\breve W_{n, S, r}^{(s, \la)}$.  
 
 Otherwise, we can suppose that $B = g_\sigma \xbold^b$.  Then  $B\, t(c_{u, v}^\la) = t(\tau_\sigma \tbold^b) t(c_{u, v}^\la) \equiv
 t( \tau_\sigma \tbold^b \, c_{u, v}^\la)$  modulo the span of basis diagrams with fewer than $s$ vertical strands.   Moreover,  $t( \tau_\sigma \tbold^b  \, c_{u, v}^\la)$  is a linear combination of
 elements $t(  c_{u', v}^\la)$,  modulo $t(\breve H_{s, S, r}^\la)$, with coefficients independent of $v$, by the cellularity of the basis $\mathcal C_s$  of $H_{s, S, r}$.
 
 The conclusion follows from these observations.
 \end{proof}

\begin{theorem} \label{theorem:  cellular cyclotomic bmw}  Let $S$ be an admissible integral domain.
$(\mathcal C, \Lambda)$  is a cellular basis of the cyclotomic BMW algebra $\bmw {n, S,r}$.
\end{theorem}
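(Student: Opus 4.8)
The plan is to verify, in turn, the three conditions in the definition of a cell datum for $(\mathcal C, \Lambda)$; the first and third are quick, and essentially all the work is in the second. Throughout I write $I_n^{(s-1)}$ for the span of affine tangle diagrams of rank strictly less than $s$. By Theorem~\ref{theorem:  the x-i basis of cyclotomic BMW} and Lemma~\ref{lemma: rewriting T-D 3} this coincides with $\bigoplus_{t < s} V_n^t$, so it is a two--sided ideal (the rank of a product cannot exceed the rank of either factor), and from the definition of the order on $\Lambda$ one reads off $\breve W_{n, S, r}^{(s, \la)} = I_n^{(s-1)} + \operatorname{span}\{\, c^{(s,\mu)}_{\,\cdot,\,\cdot} : \mu > \la \,\}$. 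For condition (1), the basis $\U_r$ decomposes $\bmw {n, S, r} = \bigoplus_s V_n^s$ according to the number of vertical strands, and Lemma~\ref{lemma:  basis of cyclotomic Hecke to basis of cyclotomic BMW}, applied with each $\mathbb B_s$ taken to be the cellular basis $\mathcal C_s$ of $\hec {s, S, r}$, identifies $\mathcal C \cap V_n^s$ with a basis of $V_n^s$; hence $\mathcal C$ is an $S$--basis of $\bmw {n, S, r}$. Condition (3) is exactly Lemma~\ref{lemma: * property of cellular basis}.

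It remains to prove condition (2): for $a \in \bmw {n, S, r}$ one has $a\, c^{(s,\la)}_{(\alpha, x, u),(\beta, y, v)} \equiv \sum_{(\alpha', x', u')} r^{(\alpha,x,u)}_{(\alpha',x',u')}(a)\, c^{(s,\la)}_{(\alpha', x', u'),(\beta, y, v)} \pmod{\breve W_{n, S, r}^{(s, \la)}}$, with coefficients independent of the right index $(\beta, y, v)$. By linearity it suffices to treat $a$ an affine tangle diagram. Writing $c^{(s,\la)}_{(\alpha, x, u),(\beta, y, v)} = g_\alpha\,( x y^* \odot t(c^\la_{u,v}))\,(g_\beta)^*$, I keep the factor $(g_\beta)^*$ fixed on the right and reduce the element $w = a g_\alpha$ acting on $x y^* \odot t(c^\la_{u,v})$. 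Keeping $(g_\beta)^*$ literal makes compatibility with the filtration automatic: once I show $w\,( x y^* \odot t(c^\la_{u,v})) \equiv \sum r\, g_{\alpha'}( x' y^* \odot t(c^\la_{u',v})) \pmod{\breve W_{n, S, r}^{(s, \la)}}$, with the error in $I_n^{(s-1)} + \operatorname{span}\{\, g_{\alpha''}(x'' y^* \odot t(c^\mu_{\cdot, v})) : \mu > \la\,\}$, right multiplication by $(g_\beta)^*$ turns each displayed term into $c^{(s,\la)}_{(\alpha', x', u'),(\beta, y, v)}$ and carries the error into $I_n^{(s-1)} + \operatorname{span}\{ c^{(s,\mu)}_{\cdot,(\beta, y, v)} : \mu > \la\} \subseteq \breve W_{n, S, r}^{(s, \la)}$.

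To analyse $w\,( x y^* \odot t(c^\la_{u,v}))$, I first factor $x y^* \odot t(c^\la_{u,v}) = (x y^* \odot 1_s)\,(1_{n-s} \odot t(c^\la_{u,v}))$, which is legitimate because $t(c^\la_{u,v})$ has only vertical strands, so no closed loops are created. Now $x y^* \odot 1_s$ has exactly $s$ vertical strands, and its bottom consists of the cups of $y^*$ on the inner $n-s$ vertices together with $s$ outer through--ends; left multiplication by $w$ leaves this bottom untouched, and reducing $w\,(x y^* \odot 1_s)$ to flagpole--descending stratified form by the algorithm of~\cite{GH2} produces only diagrams of rank $\le s$ sharing that bottom. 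Discarding the rank $< s$ terms (which lie in $I_n^{(s-1)}$), the surviving terms have the form $g_{\alpha'}( x' y^* \odot B')$, with $B'$ a diagram on the $s$ outer strands lifting a cyclotomic Hecke element and with the inner bottom half $y^*$ intact; crucially, $\alpha'$, $x'$, $B'$, and the coefficients are independent of $y$ and of the datum $c^\la_{u,v}$. Reattaching $1_{n-s}\odot t(c^\la_{u,v})$ and using $(A \odot B)(S \odot T) = AS \odot BT$ gives $w\,( x y^* \odot t(c^\la_{u,v})) \equiv \sum g_{\alpha'}( x' y^* \odot B'\, t(c^\la_{u,v})) \pmod{I_n^{(s-1)}}$. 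Finally, modulo $I_s$ one has $B'\, t(c^\la_{u,v}) \equiv t(b'\, c^\la_{u,v})$ for a Hecke element $b'$, and the cellularity of $\mathcal C_s$ yields $b'\, c^\la_{u,v} \equiv \sum_{u'} r_{u'}\, c^\la_{u',v} \pmod{\breve H_{s, S, r}^\la}$ with coefficients independent of $v$; lifting and wrapping recovers precisely the required congruence. The prototype of this computation, for multipliers already of the tensor form $A \odot B$, is Lemma~\ref{lemma:  cellular cyclotomic BMW}.

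I expect the \emph{main obstacle} to be exactly the step that upgrades Lemma~\ref{lemma:  cellular cyclotomic BMW} from multipliers of the special form $A \odot B$ to an arbitrary tangle $w$: one must show that the rank--$s$ part of $w\,(x y^* \odot 1_s)$ is again a combination of ``split'' half--diagrams $g_{\alpha'}(x' y^* \odot B')$ in which the inner bottom half $y^*$ is preserved and the outer through--strand action is realized by an honest left multiplication by a cyclotomic Hecke element. This is the content that exhibits $\bmw {n, S, r}$ as an iterated inflation of the algebras $\hec {s, S, r}$ along the half--diagram modules, and it is the point where the geometry must be controlled carefully; I would justify it via the extension of the flagpole--descending reduction to $(k,\ell)$--tangle diagrams recorded in Remark~\ref{remark:  stratified flagpole descending k-ell diagrams}, together with the bookkeeping of closed loops built into the definition of $\odot$.
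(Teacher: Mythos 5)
Your handling of conditions (1) and (3) coincides with the paper's (Theorem \ref{theorem:  the x-i basis of cyclotomic BMW} plus Lemma \ref{lemma:  basis of cyclotomic Hecke to basis of cyclotomic BMW} for the basis property, Lemma \ref{lemma: * property of cellular basis} for the involution), and your identification of $\breve W_{n,S,r}^{(s,\la)}$ with $I_n^{(s-1)}$ plus the higher cells in degree $s$ is correct. The problem is condition (2). You reduce everything to a single claim: that the rank--$s$ part of $w\,(x y^* \odot 1_s)$ is a combination of split elements $g_{\alpha'}(x'y^* \odot B')$ with the bottom half $y^*$ intact, $B'$ an honest lift of a cyclotomic Hecke element, and coefficients independent of $y$ --- and you then say you ``would justify'' this via the flagpole--descending reduction of Remark \ref{remark:  stratified flagpole descending k-ell diagrams}. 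That claim \emph{is} the theorem; it is not a consequence of the reduction algorithm. The algorithm of \cite{GH2}, Propositions 2.18--2.19, is a global rewriting by skein relations: it carries no guarantee that a designated bottom half of the diagram survives the rewriting, and no guarantee that the action on the outer through--strands is realized by left multiplication by a Hecke element. You correctly name this as the main obstacle, but you do not close it.

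The paper closes it by first reducing to $w \in \{e_i, g_i, x_1\}$ (which you do not do --- working with an arbitrary tangle $a$ makes the problem harder, not easier) and then by explicit case analysis on the position of $\alpha\inv(i), \alpha\inv(i+1)$ relative to the inner/outer boundary. All cases but one land in Lemma \ref{lemma:  cellular cyclotomic BMW} because the multiplier commutes past $g_\alpha$ into $W_{n-s}\otimes W_s$. The one substantive case, $w = e_i$ with $\alpha\inv(i) \le n-s < \alpha\inv(i+1)$, requires the factorization $e_i g_{\alpha\varrho} = \cap_i\, g_\sigma\, \cup_{n-s}$ and, critically, the identity $x_{n-s-1}^{a}\cup_{n-s}\cap_{n-s-1} = \cup_{n-s}\cap_{n-s-1}\,x_{n-s+1}^{a}$ from \cite{GH1}, which migrates the winding of an inner strand of $x'$ into a power $x_1^a$ acting on the outer Hecke factor. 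This shows that $B'$ is \emph{not} simply ``$w$ seen on the outer strands'': it absorbs winding data from the inner half--diagram, and only the cellularity of $\mathcal C_s$ (via Lemma \ref{lemma:  cellular cyclotomic BMW}) lets one re--expand $x_1^a\, t(c_{u',v}^\la)$ in the required triangular form. Your proposal contains no mechanism for this transfer, so as written it has a genuine gap at exactly the step that carries the content of the proof.
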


\begin{proof}  Theorem \ref{theorem:  the x-i basis of cyclotomic BMW} and
 Lemma \ref{lemma:  basis of cyclotomic Hecke to basis of cyclotomic BMW}  implies that $\mathcal C$ is a basis of $\bmw {n, S,r}$, and  property (3) of cellular bases holds by Lemma \ref{lemma: * property of cellular basis}.  
 It remains to verify 
axiom (2) for cellular bases.  Thus we have to show that for $w \in \bmw {n, S,r}$, and for a basis element   $
c_{(\alpha, x, u), (\beta, y, v)}^{(s, \la)} =  g_\alpha ( x y^* \odot t(c_{u, v}^\la))  (g_\beta)^*
$,   the product
\begin{equation} \label{equation: task for cellularity proof}
w \  g_\alpha ( x y^* \odot t(c_{u, v}^\la))  (g_\beta)^*
\end{equation}
 can be written as a linear combination of elements
$$
g_{\alpha'} ( x' y^* \odot t(c_{u', v}^\la))  (g_\beta)^*,
$$
modulo $\breve W_{n, S,r}^{(s, \la)}$  (with coefficients independent of $(\beta, y, v)$).     

 It suffices to consider products as in  Equation (\ref{equation: task for cellularity proof})
 with $w$ equal to $e_i$ or to $g_i$  for some $i$,  or $w = x_1$.   We consider first $w = e_i$ or $w= g_i$.  Here there are several cases,  depending on the relative position of $\alpha\inv(i)$  and $\alpha\inv(i+1)$.  
 
Suppose that $\alpha\inv(i) > \alpha\inv(i+1)$.  Then $g_\alpha = g_i g_{\alpha_1}$, where
${\alpha_1}\inv(i) < \alpha_1\inv(i+1)$, and $\alpha_1$ is also a $(n-s, s)$-shuffle.
Thus $e_i g_\alpha = e_i g_i g_{\alpha_1} = \rho\inv e_i g_{\alpha_1}$.  Likewise,
$g_i g_\alpha = (g_i)^2 g_{\alpha_1} = g_{\alpha_1}  + (q - q\inv)  g_\alpha + (q\inv -q) \rho\inv e_i g_{\alpha_1}$.
We are therefore reduced to considering the case that $\alpha\inv(i) < 
\alpha\inv(i+1)$.

Suppose that $\alpha\inv(i+1) \le n-s $  or $n-s + 1 \le \alpha\inv(i)$.  Then  $\alpha\inv(i+1) = \alpha\inv(i) + 1$,  because $\alpha$ is a $(n-s, s)$--shuffle.  Write $\chi_i$ for $g_i$ or $e_i$.  We have 
$\chi_i g_\alpha =  g_\alpha \chi_{\alpha\inv(i)}$, as one can verify with pictures.
But $\chi_{\alpha\inv(i)} \in W_{n-s} \otimes W_s$,  so the conclusion follows from Lemma  \ref{lemma:  cellular cyclotomic BMW}. 
 
 It remains to examine the case that $\alpha\inv(i) \le n-s$  and $\alpha\inv(i+1) \ge n-s +1$.
In this case,  $g_i g_\alpha$ is  an $(n-s, s)$--shuffle  so
$g_i \  g_\alpha ( x y^* \odot t(c_{u, v}^\la))  (g_\beta)^*$
is another basis element.

Next, we have to consider the product
$e_i \  g_\alpha ( x y^* \odot t(c_{u, v}^\la))  (g_\beta)^*.$
 Define a permutation $\varrho$ by
\begin{equation} \label{equation:  bmw cellularity proof, defn of varrho}
\varrho(j) = 
\begin{cases}
j &\text{if  $j < \alpha\inv(i)$,}\cr
j+1  &\text{if $\alpha\inv(i) \le j < n-s$,}\cr
\alpha\inv(i) & \text{if $j = n-s$,}\cr
\alpha\inv(i+1) &\text{if $j = n-s + 1$,}\cr
j -1  &\text{if $n-s +1 < j \le \alpha\inv(i+1)$,}\cr
j & \text{if $ j > \alpha\inv(i+1)$.}\cr
\end{cases}
\end{equation}
Since $\varrho \in \S_{n-s} \times \S_r$, we have $\ell (\alpha \varrho) = \ell (\alpha) + 
\ell (\varrho)$ and
$g_{\alpha \varrho} = g_\alpha g_\varrho$.  The permutation $\alpha \varrho$ has the 
following properties:
$\alpha \varrho(n-s) = i$;  $\alpha \varrho(n-s+1) = i+1$;  if $1 \le a < b \le n-s-1$ or 
$n-s + 2 \le  a < b \le n$, then $\alpha\varrho(a) < \alpha\varrho(b)$.  We have
\begin{equation} \label{equation: bmw cellularity proof 1}
e_i  g_\alpha = e_i g_\alpha  g_\varrho g_\varrho\inv = e_i g_{\alpha \varrho} g_\varrho 
\inv.
\end{equation}
The tangle $e_i g_{\alpha \varrho}$ is stratified and has a horizontal strand connecting the top vertices $\p {n-s}$  and $\p {n-s + 1}$.  Contracting that strand,  we
get
\begin{equation} \label{equation: abmw cellularity proof 2}
e_i g_{\alpha \varrho} =   \cap_i \, g_\sigma \, \cup_{n-s},
\end{equation}
for a certain $(n-s-1, s-1)$--shuffle $\sigma$.
Therefore,
 \begin{equation}
  e_i\  g_\alpha ( x y^* \odot t(c_{u, v}^\la))  (g_\beta)^* = 
   \cap_i \,g_{\sigma} \cup_{n-s} g_\varrho \inv   ( x y^* \odot t(c_{u, v}^\la))  (g_\beta)^*.
 \end{equation}
 Moreover, 
 $g_\varrho\inv \in W_{n-s} \otimes W_{s} \subseteq \bmw {n, S, r}$,  so $g_\varrho \inv   ( x y^* \odot t(c_{u, v}^\la))  (g_\beta)^* $  is congruent modulo $\breve W_{n, S,r}^{(s, \la)}$ to a  linear combination of  elements
$ ( x' y^* \odot t(c_{u', v}^\la))   (g_\beta)^* $,  with coefficients independent of  $\beta$, $y$ and $v$,  by Lemma \ref{lemma:  cellular cyclotomic BMW}.  Thus we have to consider the products
\begin{equation} \label{equation: cyclotomic cellular proof1}
 \cap_i\, g_{\sigma} \cup_{n-s}  ( x' y^* \odot t(c_{u', v}^\la))  (g_\beta)^*.
\end{equation}

\begin{figure}
$$
\begin{aligned}
\inlinegraphic[scale = .9]{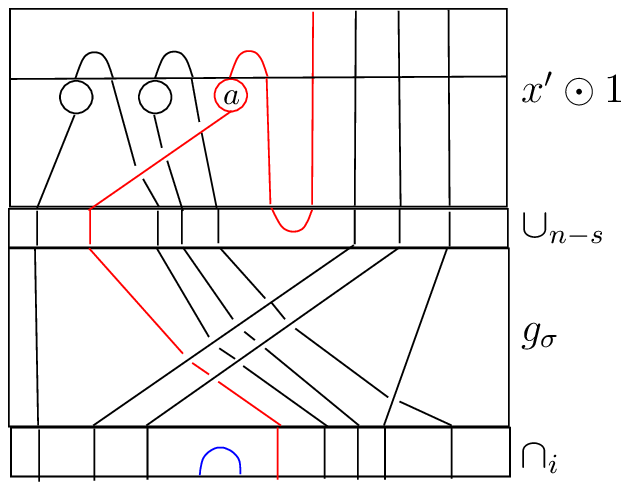} &= \inlinegraphic[scale=.9]{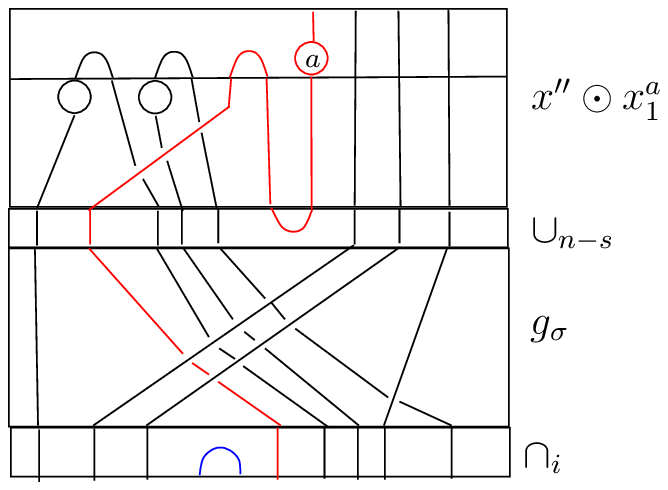}\\
&=  \inlinegraphic[scale=.9]{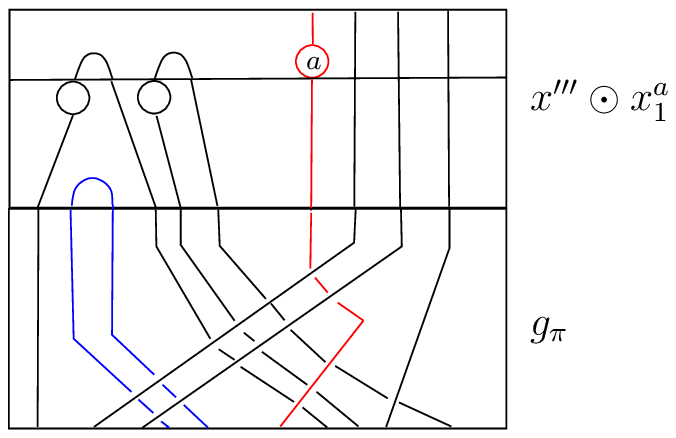}
\end{aligned}
$$
\caption{} \label{figure: affine cellular proof}
\end{figure}

Focus for a moment on the product $\cap_i\, g_{\sigma} \cup_{n-s}  ( x'  \odot 1)$, 
and write $x'$  in the form
$$
 g_{\alpha_2}  x_1^{a_1} x_3^{a_3} \cdots x_{n-s-1}^{a_{n-s-1}}( \cap_{n-s-1} \cdots  \cap_3  \cap_1), 
$$
with $\alpha_2 \in \enyang f f$  ($f = (n-s)/2$).    Figure \ref{figure: affine cellular proof} provides a guide to the computations.  Write $a = a_{n-s-1}$.  We have
\begin{equation}
\begin{aligned}
 \cap_i\, g_{\sigma} \cup_{n-s}  & ( x'  \odot 1)  \\
&=  \cap_i\, g_{\sigma}\cup_{n-s}  (g_{\alpha_2}  x_1^{a_1} x_3^{a_3} \cdots x_{n-s-1}^{a})( \cap_{n-s-1} \cdots  \cap_3  \cap_1) \\
&= \cap_i\, g_{\sigma} (g_{\alpha_2}  x_1^{a_1} x_3^{a_3} \cdots   x_{n-s-1}^{a})  \cup_{n-s}  \cap_{n-s-1} (  \cap_{n-s-3} \cdots  \cap_3  \cap_1).
\end{aligned}
\end{equation}
Note that
$$
\begin{aligned}
x_{n-s-1}^a \cup_{n-s}  \cap_{n-s-1} &= \rho^{-a} \cup_{n-s}\, x_{n-s}^{-a}\,\cap_{n-s-1} \\
& =  \cup_{n-s}\,\cap_{n-s-1}  x_{n-s+1}^{a}\, ,
\end{aligned}
$$
by  ~\cite{GH1},  Lemma 6.8 and Remark 6.9.  Thus,
\begin{equation}
\begin{aligned}
 \cap_i\, g_{\sigma}\cup_{n-s}  & ( x'  \odot 1)  \\
&=  \cap_i\, g_{\sigma} \cup_{n-s}  (g_{\alpha_2}  x_1^{a_1} x_3^{a_3} \cdots  x_{n-s-3}^{a_{n-s-3}} )   (\cap_{n-s-1}   \cdots  \cap_3  \cap_1)  x_{n-s+1}^{a} \\
&=   \cap_i\, g_{\sigma}\cup_{n-s} (x'' \odot x_1^a)
\end{aligned}
\end{equation}
where $x''$ is another stratified, flagpole descending affine $(0, n-s)$--tangle diagram
(without closed loops)  with the property that the strand
incident with the  vertex $\pbar {n-s}$  has winding number $0$ with the flagpole.
See the second stage in Figure \ref{figure: affine cellular proof}.   (In the figure,  a ``bead" on the $j$--th strand is supposed to indicate a power of $x_j$;  a bead labelled by $a$ indicates $x_j^a$.)

Since this affine  tangle diagram is stratified,  the strand incident with the top vertex
$\p {1}$  can be pulled straight,   and the horizontal strand connecting the bottom vertices $\pbar i$ and $\pbar {i+1}$  can be pulled up.   The result is an affine tangle diagram with the factorization $g_\pi  (x''' \odot x_1^a)$,  where $x'''$ is a stratified, flagpole descending affine $(0, n-s)$--tangle diagram and  $g_\pi$  is a positive permutation braid; this is illustrated in the final stage of Figure   \ref{figure: affine cellular proof}.

Consequently, we have:
\begin{equation} \label{equation: cyclotomic cellular proof2}
\begin{aligned}
 \cap_i\, &g_{\sigma} \cup_{n-s}  ( x' y^* \odot t(c_{u', v}^\la))  (g_\beta)^* \\&= 
g_\pi  ( x''' y^* \odot  x_1^a t(c_{u', v}^\la))  (g_\beta)^* \end{aligned}
\end{equation}
By Lemma \ref{lemma:  cellular cyclotomic BMW},  this is congruent mod
$\breve W_{n, S,r}^{(s, \la)}$  to a linear combination of terms
\begin{equation} \label{equation: cyclotomic cellular proof4}
\begin{aligned}
g_\pi  ( x''' y^* \odot   t(c_{u'', v}^\la))  (g_\beta)^*,
 \end{aligned}
\end{equation}
with coefficients independent of $y$, $\beta$, and $v$.

Finally, the permutation $\pi$  can be factored as $\pi = \pi_1 \pi_2$,   where
$\pi_1$ is an \break $(n-s, s)$--shuffle,  $\pi_2 \in \S_{n-s} \times \S_s$  and 
$\ell(\pi) = \ell(\pi_1) + \ell(\pi_2)$.  Consequently,   $g_\pi = g_{\pi_1}  g_{\pi_2}$, 
where  $g_{\pi_2} \in W_{n-s} \otimes W_s$.   We can now apply\ ÊLemma  \ref{lemma:  cellular cyclotomic BMW} again
to rewrite the product of Equation (\ref{equation: cyclotomic cellular proof4}) as a linear combination of elements \break
%\begin{equation}
$
 g_{\pi_1}  ( x'''' y^* \odot t(c_{u''', v}^\la))  (g_\beta)^*, 
 $
%\end{equation}
modulo $\breve W_{n, S, r}^{(s, \la)}$, with coefficients independent of $\beta$, $y$, and $v$.  This completes the proof of the case: $w = e_i$,  $\alpha\inv(i) \le n-s$  and $n - s + 1 \le \alpha\inv(i+1) $. 

It remains to  consider the product
$$x_1 \  g_\alpha ( x y^* \odot t(c_{u, v}^\la))  (g_\beta)^*.$$
 Since $\alpha$ is an $(n-s, s)$--shuffle,  either $\alpha\inv(1) = 1$  or 
$\alpha\inv(1) = n-s + 1$.   In case $\alpha\inv(1) = 1$,  we have $x_1 g_\alpha = g_\alpha x_1$, and the result follows by applying  Lemma \ref{lemma:  cellular cyclotomic BMW}.  If $\alpha\inv(1) = n-s + 1$,  then we can write $g_\alpha$ as
$g_\alpha =  g_{\alpha_2} (g_1 g_2 \dots g_{n-s -1})  $, where 
$g_{\alpha_2}$ is a word in $g_j$, $j \ge 2$.  In this case,  
$x_1 g_\alpha =  g_{\alpha_2}(g_1\inv g_2\inv \dots g_{n-s -1}\inv)   x_{n-s+1}$. Now the result follows by first applying Lemma \ref{lemma:  cellular cyclotomic BMW}, and then
expanding each $g_j\inv$ in terms of $g_j$ and $e_j$  and appealing to the previous part of the proof.
\end{proof}

\bibliographystyle{amsplain}
\bibliography{cellular}

\def\cprime{$'$} \def\cprime{$'$}
\providecommand{\bysame}{\leavevmode\hbox to3em{\hrulefill}\thinspace}
\providecommand{\MR}{\relax\ifhmode\unskip\space\fi MR }
% \MRhref is called by the amsart/book/proc definition of \MR.
\providecommand{\MRhref}[2]{%
  \href{http://www.ams.org/mathscinet-getitem?mr=#1}{#2}
}
\providecommand{\href}[2]{#2}
\begin{thebibliography}{10}

\bibitem{ariki-book}
Susumu Ariki, \emph{Representations of quantum algebras and combinatorics of
  {Y}oung tableaux}, University Lecture Series, vol.~26, American Mathematical
  Society, Providence, RI, 2002, Translated from the 2000 Japanese edition and
  revised by the author. \MR{MR1911030 (2004b:17022)}

\bibitem{Birman-Wenzl}
Joan~S. Birman and Hans Wenzl, \emph{Braids, link polynomials and a new
  algebra}, Trans. Amer. Math. Soc. \textbf{313} (1989), no.~1, 249--273.
  \MR{90g:57004}

\bibitem{Enyang1}
John Enyang, \emph{Cellular bases for the {B}rauer and
  {B}irman-{M}urakami-{W}enzl algebras}, J. Algebra \textbf{281} (2004), no.~2,
  413--449. \MR{MR2098377 (2005f:20008)}

\bibitem{Enyang2}
\bysame, \emph{Specht modules and semisimplicity criteria for {B}rauer and
  {B}irman-{M}urakami-{W}enzl algebras}, J. Algebraic Combin. \textbf{26}
  (2007), no.~3, 291--341. \MR{MR2348099}

\bibitem{GH1}
Frederick~M. Goodman and Holly Hauschild, \emph{Affine
  {B}irman--{W}enzl--{M}urakami algebras and tangles in the solid torus}, Fund.
  Math. \textbf{190} (2006), 77--137. \MR{MR2232856}

\bibitem{GH2}
Frederick~M. Goodman and Holly~Hauschild Mosley, \emph{Cyclotomic
  {B}irman-{W}enzl-{M}urakami algebras {I}: Freeness and realization as tangle
  algebras}, preprint (2006, revised 2007), arXiv:math/0612064.

\bibitem{GH3}
\bysame, \emph{Cyclotomic {B}irman-{W}enzl-{M}urakami algebras {II}:
  Admissibility relations and freeness}, preprint (2006, revised 2007),
  arXiv:math/0612065.

\bibitem{Graham-Lehrer-cellular}
J.~J. Graham and G.~I. Lehrer, \emph{Cellular algebras}, Invent. Math.
  \textbf{123} (1996), no.~1, 1--34. \MR{MR1376244 (97h:20016)}

\bibitem{H-O2}
Reinhard H{\"a}ring-Oldenburg, \emph{Cyclotomic {B}irman-{M}urakami-{W}enzl
  algebras}, J. Pure Appl. Algebra \textbf{161} (2001), no.~1-2, 113--144.
  \MR{MR1834081 (2002c:20055)}

\bibitem{jones-invariant}
Vaughan F.~R. Jones, \emph{A polynomial invariant for knots via von {N}eumann
  algebras}, Bull. Amer. Math. Soc. (N.S.) \textbf{12} (1985), no.~1, 103--111.
  \MR{MR766964 (86e:57006)}

\bibitem{Kauffman}
Louis~H. Kauffman, \emph{An invariant of regular isotopy}, Trans. Amer. Math.
  Soc. \textbf{318} (1990), no.~2, 417--471. \MR{MR958895 (90g:57007)}

\bibitem{KX-inflations}
Steffen K{\"o}nig and Changchang Xi, \emph{Cellular algebras: inflations and
  {M}orita equivalences}, J. London Math. Soc. (2) \textbf{60} (1999), no.~3,
  700--722. \MR{MR1753809 (2001a:16020)}

\bibitem{Mathas-book}
Andrew Mathas, \emph{Iwahori-{H}ecke algebras and {S}chur algebras of the
  symmetric group}, University Lecture Series, vol.~15, American Mathematical
  Society, Providence, RI, 1999. \MR{MR1711316 (2001g:20006)}

\bibitem{Morton-Traczyk}
Hugh Morton and Pawe\l\ Traczyk, \emph{Knots and algebras}, Contribuciones
  Matematicas en homenaje al profesor D. Antonio Plans Sanz de Bremond
  (E.~Martin-Peindador and A.~Rodez Usan, eds.), University of Zaragoza,
  Zaragoza, 1990, pp.~201--220.

\bibitem{Morton-Wassermann}
Hugh Morton and Antony Wassermann, \emph{A basis for the {B}irman-{W}enzl
  algebra}, Unpublished manuscript (1989, revised 2000), 1--29.

\bibitem{Murakami-BMW}
Jun Murakami, \emph{The {K}auffman polynomial of links and representation
  theory}, Osaka J. Math. \textbf{24} (1987), no.~4, 745--758. \MR{MR927059
  (89c:57007)}

\bibitem{Turaev-Kauffman-skein}
Vladimir Turaev, \emph{The {C}onway and {K}auffman modules of a solid torus},
  Zap. Nauchn. Sem. Leningrad. Otdel. Mat. Inst. Steklov. (LOMI) \textbf{167}
  (1988), no.~Issled. Topol. 6, 79--89, 190. \MR{MR964255 (90f:57012)}

\bibitem{Wilcox-Yu}
Stewart Wilcox and Shona Yu, \emph{The cyclotomic {BMW} algebra associated with
  the two string type {B} braid group}, preprint (2006), arXiv:math/0611518.

\bibitem{Wilcox-Yu2}
\bysame, in preparation (2008).

\bibitem{Xi-BMW}
Changchang Xi, \emph{On the quasi-heredity of {B}irman-{W}enzl algebras}, Adv.
  Math. \textbf{154} (2000), no.~2, 280--298. \MR{MR1784677 (2001g:20008)}

\bibitem{Yu-thesis}
Shona Yu, \emph{The cyclotomic {B}irman--{M}urakami--{W}enzl algebras}, Ph.D.
  Thesis, University of Sydney (2007).

\end{thebibliography}

\end{document}